\theoremstyle{plain}
\newtheorem{theorem}{Theorem}[section]
\newtheorem{corollary}[theorem]{Corollary}
\newtheorem{fact}[theorem]{Fact}
\newtheorem{lemma}[theorem]{Lemma}
\theoremstyle{definition}
\theoremstyle{remark}
\newtheorem{remark}[theorem]{Remark}
\numberwithin{equation}{section}
\newcommand{\N}{\mathbb{N}}
\newcommand{\R}{\mathbb{R}}
\newcommand{\ind}[1]{{\mathbf{1}_{\left\{#1\right\}}}}
\newcommand{\floor}[1]{{\left\lfloor #1 \right\rfloor}}
\newcommand{\ceil}[1]{{\left\lceil #1 \right\rceil}}
\newcommand{\calC}{\mathcal{C}}
\DeclareMathOperator{\E}{\mathbf{E}}
\renewcommand{\P}{\mathbf{P}}
\DeclareMathOperator{\Var}{\mathbf{V}\mathrm{ar}}
\newcommand{\calF}{\mathcal{F}}
\newcommand{\calL}{\mathcal{L}}
\renewcommand{\bar}[1]{\overline{#1}}
\newcommand{\egaldistr}{{\overset{(d)}{=}}}
\renewcommand{\rho}{\varrho}
\renewcommand{\epsilon}{\varepsilon}
\renewcommand{\phi}{\varphi}
\newcommand{\T}{\mathbf{T}}
\renewcommand{\hat}[1]{\widehat{#1}}
\newcommand{\rmP}{\mathbb{P}}
\newcommand{\rmE}{\mathbb{E}}
\newcommand{\rmVar}{\mathbb{V}\mathrm{ar}}
\newcommand{\calN}{\mathcal{N}}
\newcommand{\envAS}{\ensuremath{\P\text{-a.s. }}}
\title{Maximal displacement of a supercritical branching random walk in a time-inhomogeneous random environment}
\author{Bastien Mallein\thanks{LAGA, Université Paris 13} ~and Piotr Mi\l{}oś\thanks{Faculty of Mathematics, Informatics and Mechanics, University of Warsaw}}
\date{\today}
\begin{document}

\maketitle
\textcolor{blue}{}\global\long\def\TDD#1{{\color{red}To\, Do(#1)}}
\begin{abstract}
The behavior of the maximal displacement of a supercritical branching random walk has been a subject of intense studies for a long time. But only recently the case of time-inhomogeneous branching has gained focus. The contribution of this paper is to analyze a time-inhomogeneous model with two levels of randomness. In the first step a sequence of branching laws is sampled independently according to a distribution on the set of point measures' laws. Conditionally on the realization of this sequence (called environment) we define a branching random walk and find the asymptotic behavior of its maximal particle. It is of the form $V_n -\varphi \log n + o_\P(\log n)$, where $V_n$ is a function of the environment that behaves as a random walk and $\varphi>0$ is a deterministic constant, which turns out to be bigger than the usual logarithmic correction of the homogeneous branching random walk.
\end{abstract}

\section{Introduction}

We introduce a model of time-inhomogeneous branching random walk on $\R$. Given a sequence $\calL=(\calL_n, n \in \N)$ of point processes laws\footnote{i.e., probability distributions on $\bigcup_{k \in \bar{\N}} \R^k$.}, the time-inhomogeneous branching random walk in the environment $\calL$ is a process constructed as follows. It starts with one individual located at the origin at time 0. This individual dies at time $1$ giving birth to children, that are positioned according to a realization of a point process of law $\calL_1$. Similarly, at each time $n$ every individual alive at generation $n-1$ dies giving birth to children. The position of the children with respect to their parent are given by an independent realization of a point process with law $\calL_{n}$. We denote by $\T$ the (random) genealogical tree of the process. For a given individual $u \in \T$ we write $V(u) \in \R$ for the position of $u$ and $|u|$ for the generation at which $u$ is alive. The pair $(\T,V)$ is called the branching random walk in the time-inhomogeneous environment $\calL$.

The tree $\T$ can be encoded using the well-known Ulam-Harris-Neveu notation, and $V$ is then a random map $\T \to \R$. A precise construction of a time-inhomogeneous branching random walk is presented in \cite[Section 1.1]{Mal15a}.

We assume the time-inhomogeneous Galton-Watson tree $\T$ to be supercritical (the number of individuals alive at generation $n$ grows exponentially fast). In order to simplify the presentation we assume that the number of children of each individual is always at least 1. We take interest in the maximal displacement at time $n$ of $(\T,V)$, defined by 
\[  M_n = \max_{u \in \T : |u|=n} V(u).\]

When the reproduction law does not depend on the time, the asymptotic behavior of $M_n$ has been thoroughly investigated. Hammersley \cite{Ham74}, Kingman \cite{Kin75} and Biggins \cite{Big76} proved that $M_n$ grows at a ballistic speed $v$. Addario-Berry and Reed \cite{ABR09} and Hu and Shi \cite{HuS09} proved the second order of this asymptotic is a negative logarithmic correction in probability. Aïdékon \cite{Aid13} obtained the convergence in law for $M_n-vn+c\log n$, for the correctly chosen constant $c$.

Some recent articles study the effects of a time-inhomogeneous environment on the asymptotic behavior of $M_n$. Fang and Zeitouni \cite{FaZ12a} introduced a branching random walk of length $n$ in which individuals have two children moving independently from their parent's position, with law $\calN(0,\sigma_1^2)$ before time $n/2$ and law $\calN(0,\sigma^2_2)$ between times $n/2$ and~$n$. They proved that for this model, the maximal displacement is again given by a first ballistic order, logarithmic correction plus stochastically bounded fluctuations. More generally, for a branching random walk in which the reproduction law scales with $n$, the correction is expected to be of the larger order $n^{1/3}$ \cite{Mal15b}. Maillard and Zeitouni~\cite{MaZ15} proved that for a branching Brownian motion in time-inhomogeneous environment, the asymptotic behavior is given by a ballistic speed, first correction of order $n^{1/3}$, second correction of order $\log n$ plus stochastically bounded fluctuations.

The main contribution of this article is to study the case of the environment sampled randomly. More precisely, we set $\calL = (\calL_n, n \in \N)$ to be an i.i.d sequence of random variables with the values in the space of laws of point processes. A branching random walk in random environment (BRWre) is a branching random walk with the time-inhomogeneous environment $\calL$. Conditionally on this sequence, we write $\rmP_\calL$ for the law of this BRWre $(\T,V)$ and $\rmE_\calL$ for the corresponding expectation. The joint probability of the environment and the branching random walk is written $\P$, with the corresponding expectation $\E$.

This model of BRWre has been introduced by Biggins and Kyprianou in \cite{BiK04}. Huang and Liu \cite{HuL14} proved that the maximal displacement in the process grows at ballistic speed almost surely, and obtained central limit theorems and large deviations principles for the counting measure of the process. Additional results on the behavior of the so-called bulk of the BRWre may be found in \cite{GLW14,HLL14}, where the growth rate in the bulk of the process is studied, and a central limit theorem and a large deviation principle are derived. Prior to these results, other kind of random environments have been studied for the branching random walk. For example, Baillon, Clément, Greven and den Hollander \cite{BCGH} considered a branching random walk in which the reproduction of individuals depending on their position, instead of the time at which they were alive. Hu and Yoshida \cite{HuY}, as well as other authors, took interest in branching random walks in space-time random environment.

\paragraph*{Notation and assumptions.} Given a sequence $(x_n) \in \R^\N$, we write $O_\P(x_n)$ for a sequence of random variables $(X_n, n \in \N)$ such that $(X_n/x_n)$ is tight. Similarly, $o_\P(x_n)$ denotes a sequence of random variables $(X_n, n \in \N)$ such that $\frac{X_n}{x_n} \to 0$ in $\P$-probability. Moreover $C$ and $c$ stand for two positive constants respectively large enough and small enough, that may change from line to line.

To ensure the non-extinction and non-triviality of the BRWre $(\T,V)$, we assume that
\begin{equation}
  \label{eqn:nonExtinction}
  \rmP_\calL\left( \{u \in \T : |u|=1\} = \emptyset \right) = 0, \quad \envAS
\end{equation}
\begin{equation}
	\text{ and } \quad \P\left(\rmP_\calL\left( \#\{u \in \T : |u|=1\}>1 \right)>0\right)>0.
  \label{eqn:supercritical}
 \end{equation}
By \cite{Tanny:1977aa} one checks that \eqref{eqn:nonExtinction} and \eqref{eqn:supercritical} are sufficient conditions for the random tree $\T$ to be supercritical, and its growth rate is at least exponential. The forthcoming assumption \eqref{eqn:secondMoment} also implies the growth rate of the population is exponential.

For $n \in \N$, we introduce the log-Laplace transform of the point process law $\calL_n$ denoted by $\kappa_n:(0, \infty)\to (-\infty, \infty]$ and given by
\begin{equation}
  \label{eqn:logLaplace}
  \kappa_n (\theta) = \log \rmE_\calL\left( \sum_{\ell \in L_n} e^{\theta \ell} \right),
\end{equation}
where $L_n$ is a point process on $\R$ distributed according to the law $\calL_n$. As the point process $L_n$ is a.s. non-empty we have $\kappa_n(\theta)>-\infty$ \envAS For a fixed $\theta > 0$, $(\kappa_n(\theta), n \in \N)$ is an i.i.d. sequence of random variables under $\P$. We assume that $\E\left(\kappa_1(\theta)_-\right)< \infty$ for all $\theta > 0$, thus we may define $\kappa : \R_+ \to \R \cup \{ \infty\}$ by $\kappa(\theta) = \E\left( \kappa_n(\theta)\right)$.

As $\kappa_n$ is the log-Laplace transform of a measure on $\R$, $\kappa$ is a convex function, $\calC^\infty$ on the interior of the interval $\{\theta > 0 : \kappa(\theta)< \infty\}$. Assuming that this interval is non-empty, we define
\begin{equation}
  \label{eqn:defSpeed}
  v = \inf_{\theta > 0} \frac{\kappa(\theta)}{\theta},
\end{equation}
and we assume there exists $\theta^*>0$ such that $\kappa$ is differentiable at point $\theta^*$ (thus $\kappa$ is finite in a neighborhood of $\theta^ *$) that satisfies
\begin{equation}
  \label{eqn:defTheta}
  \theta^* \kappa'(\theta^*) - \kappa(\theta^*) = 0.
\end{equation}
Under this assumption, we have $v = \kappa'(\theta^*) = \E\left( \kappa_1'(\theta^*) \right)$. We introduce two variance terms, that we assume to be finite
\begin{equation}
  \label{eqn:defVariances}
  	\sigma_Q^2 = {\theta^*}^2 \E\left( \kappa_1''(\theta^*) \right) \in (0, \infty) \quad \mathrm{and} \quad \sigma_A^2 = \mathbf{V}\mathrm{ar}\left( \theta^* \kappa_1'(\theta^*) - \kappa_1(\theta^*) \right) \in [0, \infty).
\end{equation}
Heuristically, the trajectory yielding to the rightmost position at time $n$ can be seen as a random walk path, in the time-inhomogeneous random environment. Then the quantity~$\sigma_Q^2$ represents the average quenched variance of this random walk, while $\sigma_A^2$ represents the annealed variance of the quenched expectation.

Finally, we add two integrability conditions: 
\begin{equation}
  \label{eqn:secondMoment}
  \E\left( \rmE_\calL\left( \left(\sum_{\ell \in L_1} \left( 1 + e^{\theta^* \ell} \right) \right)^2 \right) \right) <  \infty,
\end{equation}
as well as that there exists $C>0$ and $\mu>0$ such that
\begin{equation}
  \label{eqn:integrabilityQuenched}
  \rmE_\calL\left( \sum_{\ell \in L_1} \left(e^{(\theta^*+\mu)\ell} + e^{(\theta^* - \mu) \ell} \right) \right) \leq C e^{\kappa_1(\theta^*)}, \quad \envAS
\end{equation}

The main result of this article is to extend the scope of the result of Hu and Shi \cite{HuS09} and Addario-Berry and Reed \cite{ABR09}, which prove that for a time-homogeneous branching random walk
\begin{equation}
  \label{eqn:homogeneousMain}
  M_n = nv - \frac{3}{2\theta^*} \log n + O_\P(1).
\end{equation}
For $n \in \N$, we write $K_n = \sum_{k = 1}^n \kappa_k(\theta^*)$. This quantity is a random walk depending only on the environment, such that $K_n = \theta^* nv + O_\P(n^{1/2})$. We introduce a function $\gamma:\R_+\to \R_+$  given by the following limit
\begin{equation}
  \label{eq:defgamma}
  \lim_{t \to  \infty} \frac{1}{\log t} \log \P\left( \left. B_s + 1 \geq \beta W_s, s \leq t \right| W \right) = - \gamma(\beta),\quad \envAS
\end{equation}
where $B,W$ are independent standard Brownian motions. This function and similar problems are studied in detail in the companion paper \cite{Mallein:2015aa}. In particular, according to \cite[Theorem 1.1]{Mallein:2015aa}, the function $\gamma$ is well-defined, convex and strictly increasing on $(0, \infty)$.

\begin{theorem}
\label{thm:logCorrection}
Under the assumptions \eqref{eqn:nonExtinction}, \eqref{eqn:supercritical},  \eqref{eqn:defTheta}, \eqref{eqn:defVariances}, \eqref{eqn:secondMoment} and \eqref{eqn:integrabilityQuenched}, writing 
\[
	\phi = \frac{2}{\theta^*} \gamma \left(\tfrac{\sigma_A}{\sigma_Q}\right) + \frac{1}{2\theta^*},
\]
we have
\[
  \lim_{n \to  \infty} \rmP_\calL\left( M_n - \tfrac{1}{\theta^*} K_n \geq -\beta \log n\right) =
  \begin{cases}
    1 & \mathrm{if} \quad \beta > \phi\\
    0 & \mathrm{if} \quad \beta < \phi\\
  \end{cases}
  \qquad \text{in } \P\text{-probability}.
\]
\end{theorem}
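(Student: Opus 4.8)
The plan is to use the standard first-moment/second-moment strategy of the branching random walk literature, but carried out \emph{quenched} in the environment $\calL$, with the key new ingredient being the asymptotics of a random walk confined above a moving barrier whose slope is itself a random walk. Throughout, one works with the spinal decomposition: a many-to-one lemma expresses $\rmE_\calL$-expectations over particles at generation $n$ in terms of a single inhomogeneous random walk $(S_k)$ whose step at time $k$ has distribution tilted by $e^{\theta^* \cdot - \kappa_k(\theta^*)}$ under the law $\calL_k$. Under $\P$, this spine walk has increments with conditional mean $\kappa_k'(\theta^*)$ and conditional variance $\kappa_k''(\theta^*)$, so after recentering by $\tfrac1{\theta^*}K_n$ the spine becomes, on the diffusive scale, a Brownian motion with variance $\sigma_Q^2/{\theta^*}^2$, while the centering process $\tfrac1{\theta^*}K_n - vn$ is an independent random walk converging to a Brownian motion with variance $\sigma_A^2/{\theta^*}^2$. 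This is exactly the two-Brownian-motions picture behind the definition \eqref{eq:defgamma} of $\gamma$.

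For the upper bound (the case $\beta < \phi$, where the probability tends to $0$), I would show $\rmP_\calL(M_n \geq \tfrac1{\theta^*}K_n - \beta \log n) \to 0$ by a first-moment estimate. Writing the barrier function $f_n(k)$ that interpolates between $0$ and $\tfrac1{\theta^*}K_n - \beta\log n$ appropriately (in practice one imposes that the spine stays below a curved barrier of the form $\tfrac1{\theta^*}K_k$ plus a correction that dips by a power of $k\wedge(n-k)$), the many-to-one lemma gives
\[
  \rmE_\calL\bigl(\#\{u : |u|=n,\ V(u) \geq \tfrac1{\theta^*}K_n - \beta\log n\}\bigr) \leq e^{-\theta^*\beta\log n}\,\P_\calL\bigl(S_k \text{ stays below the barrier}\bigr)^{-1}\text{-type bound},
\]
so the expected number of high particles is $n^{\theta^*\beta}$ times a polynomial factor coming from the probability that the spine walk stays below the barrier; by \eqref{eq:defgamma} applied with $\beta_0 = \sigma_A/\sigma_Q$ this polynomial factor is $n^{-\theta^*\phi + o(1)}= n^{-2\gamma(\sigma_A/\sigma_Q) - 1/2 + o(1)}$ (the $-1/2$ being the classical cost of the spine ending near the barrier rather than merely below it), and the product goes to $0$ in $\P$-probability precisely when $\beta < \phi$. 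One must be careful that these estimates hold $\P$-a.s. in the environment on an event of probability $\to 1$; this is where the quenched integrability hypothesis \eqref{eqn:integrabilityQuenched} and the i.i.d.\ structure of $(\kappa_k(\theta^*))$ (giving $K_n = \theta^* nv + O_\P(n^{1/2})$) enter, together with a Brownian approximation for the pair of random walks $(\tfrac1{\theta^*}K_k, S_k - \tfrac1{\theta^*}K_k)$.

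For the lower bound (the case $\beta > \phi$, probability tending to $1$), I would run a truncated second-moment argument: restrict attention to particles that both end above $\tfrac1{\theta^*}K_n - \beta\log n$ at time $n$ \emph{and} whose ancestral path stays below the curved barrier at all intermediate times (this truncation is what makes the second moment comparable to the square of the first moment). The first moment of this truncated count is of order $n^{\theta^*(\beta-\phi) + o(1)} \to \infty$, and the second moment is controlled by summing, over the branching time $j$, the contribution of pairs of particles splitting at generation $j$; the barrier truncation forces the split to occur near time $0$ or near time $n$, and the second-moment hypothesis \eqref{eqn:secondMoment} controls the number of offspring at the branch point. A Paley--Zygmund inequality then gives $\rmP_\calL(M_n \geq \tfrac1{\theta^*}K_n - \beta\log n) \geq c > 0$ along the good environments, and finally a standard bootstrap — decomposing the tree at a fixed large generation $m$ and using the branching property together with the fact that the BRWre is supercritical \eqref{eqn:supercritical} so that there are $\geq e^{cm}$ essentially independent subtrees — upgrades this uniform lower bound to a probability tending to $1$.

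The main obstacle is the precise evaluation of the persistence probability that the spine walk $S_k - \tfrac1{\theta^*}K_k$ stays below the barrier $\tfrac1{\theta^*}K_k$ (i.e.\ that $S_k$ stays below $\tfrac2{\theta^*}K_k$, equivalently a random walk staying above a barrier whose slope is an independent random walk): one needs this on the polynomial scale with the \emph{exact} exponent $\gamma(\sigma_A/\sigma_Q)$, uniformly enough in the environment to transfer from the Brownian statement \eqref{eq:defgamma} to the discrete walk, and matching upper and lower bounds. This is exactly the content imported from the companion paper \cite{Mallein:2015aa}; the work here is to reduce the combinatorial barrier estimates for the branching random walk to clean applications of that single-path persistence result, handling the endpoint corrections (the extra $\tfrac1{2\theta^*}$) and the environment fluctuations with care.
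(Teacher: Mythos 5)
Your overall architecture (quenched first moment with a barrier for $\beta<\phi$, truncated second moment plus a branching bootstrap for $\beta>\phi$, everything reduced to the persistence exponent $\gamma(\sigma_A/\sigma_Q)$ from the companion paper) is the same as the paper's, and your upper bound matches Lemma \ref{lem:upperbound} combined with Lemma \ref{lem:frontier} and \eqref{eqn:randomUpperExcursion}. The genuine gap is in the lower bound, at the step where you claim that Paley--Zygmund yields $\rmP_\calL\left( M_n \geq \tfrac{1}{\theta^*}K_n - \beta\log n\right) \geq c > 0$ on good environments and that a decomposition at a \emph{fixed} generation $m$ then pushes the probability to $1$. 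In this model the only available excursion estimates (Theorem \ref{thm:randomExcursion}, imported from the companion paper) control quenched probabilities on the $\log$-scale and only in $\P$-probability: the quenched ballot probabilities genuinely fluctuate by factors $n^{\pm o(1)}$ (this is the content of Lemma \ref{lem:backward} and the reason the paper insists the correction is only $o_\P(\log n)$ and not a.s.). Consequently the second-moment ratio $\left(\rmE_\calL X_n\right)^2/\rmE_\calL\left(X_n^2\right)$ can only be bounded below by $n^{-\epsilon}$ with high $\P$-probability, not by a constant; this is exactly what the paper's Lemma \ref{lem:lowerbound} asserts, and nothing stronger. With a per-subtree success probability of order $n^{-\epsilon}$, a decomposition at a fixed generation $m$ fails outright, since $\left(1-n^{-\epsilon}\right)^{N_m} \to 1$ as $n\to+\infty$ for fixed $m$.

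The paper repairs this by decomposing at a generation $k=\ceil{\epsilon \log n}$ growing with $n$: it trims the tree to jumps bounded below by $-A$ and offspring number at most $A$, invokes Tanny's theorem \cite{Tanny:1977aa} to get $N^A_k \geq \rho_A^{k}$ on non-extinction (so polynomially many in $n$ essentially independent subtrees), and applies Lemma \ref{lem:lowerbound} to the shifted environment so that each subtree succeeds with quenched probability at least $r_A^{-k}$ with $1<r_A<\rho_A$, whence $\left(1-r_A^{-k}\right)^{N_k^A}\to 0$. This is also where the strict inequality $\beta>\phi$ is consumed: the slack $(\beta-\phi)\log n$ absorbs both the displacement $Ak=O(\log n)$ incurred during the first $k$ generations of the trimmed tree and the fluctuation $|K_k|\leq \epsilon\theta^*\log n$ of the environment over that initial stretch. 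Your proposal is missing this quantitative matching between the polynomial decay of the quenched success probability and the polynomial growth of the number of subtrees, and without it the passage from a second-moment bound to a probability tending to $1$ does not go through.
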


When the reproduction law does not depend on the time we have $\sigma^2_A = 0$. As $\gamma(0)=1/2$,  this theorem is consistent with the results of Hu and Shi, and Addario-Berry and Reed. Moreover, as $\gamma$ is increasing, as soon as $\sigma_A^2>0$ the logarithmic correction is larger than the time-homogeneous one.

\begin{remark}
In a homogeneous branching random walk with (fixed) reproduction law $\calL_1$ by standard results we have $\lim_{n \to  \infty} \frac{M_n}{n} = \inf_{\theta > 0} \frac{\kappa_1(\theta)}{\theta} =: v_1$. By Theorem~ \ref{thm:logCorrection}, $\lim_{n \to  \infty} \frac{M_n}{n} = v$ in probability for a BRWre. We observe that
\[
  v = \inf_{\theta > 0} \frac{\E\left( \kappa_1(\theta)\right)}{\theta} \geq \E\left(\inf_{\theta > 0} \frac{\kappa_1(\theta)}{\theta} \right) = \E(v_1).
\]
This inequality is strict as soon as $\sigma_A^2>0$. Therefore, for a typical non-degenerate random environment, the speed of the branching random walk in random environment is larger than the expected speed of a branching random walk with law~$\calL_1$. This behavior is similar to the one observed in the branching random walk with increasing variance in \cite{FaZ12a}, the optimal path can take advantage of the inhomogeneities of the environment to reach further points.
\end{remark}

A direct consequence of this theorem is the asymptotic behavior of $M_n$ under law $\P$.
\begin{corollary}
Under the assumptions of Theorem \ref{thm:logCorrection}, we have
\[
  \lim_{n \to  \infty} \frac{M_n - \tfrac{1}{\theta^*} K_n}{\log n} = -\phi \quad\text{in } \P \otimes \rmP_\calL \text{-probability}.
\]
\end{corollary}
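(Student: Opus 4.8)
\emph{Proof plan.} The plan is to deduce the corollary directly from Theorem~\ref{thm:logCorrection} by a soft argument, introducing no new probabilistic input. Write $X_n = (M_n - \tfrac{1}{\theta^*}K_n)/\log n$. Since $\phi > 0$ (indeed $\phi \geq \tfrac{1}{2\theta^*}$ as $\gamma \geq 0$) and the events $\{|X_n + \phi| > \epsilon\}$ decrease in $\epsilon$, establishing $X_n \to -\phi$ in $\P$-probability amounts to proving, for every $\epsilon \in (0,\phi)$,
\[
  \P\big(X_n > -\phi + \epsilon\big) \convn 0
  \qquad\text{and}\qquad
  \P\big(X_n < -\phi - \epsilon\big) \convn 0,
\]
because $\{|X_n+\phi|>\epsilon\}$ is exactly the union of these two events.

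First I would record the elementary bridge from the quenched statement of Theorem~\ref{thm:logCorrection} to these annealed probabilities. For any measurable event $A$, the tower property gives $\P(A) = \E\big(\rmP_\calL(A)\big)$. Fix $\beta > 0$ and set $Y_n^{(\beta)} = \rmP_\calL\big(M_n - \tfrac{1}{\theta^*}K_n \geq -\beta\log n\big)$, which takes values in $[0,1]$. Theorem~\ref{thm:logCorrection} states that $Y_n^{(\beta)} \to 0$ in $\P$-probability if $\beta < \phi$, and $Y_n^{(\beta)} \to 1$ in $\P$-probability if $\beta > \phi$. Since these variables are uniformly bounded, convergence in probability to a constant $c \in \{0,1\}$ upgrades to convergence in $L^1(\P)$; explicitly, for any $\delta > 0$,
\[
  \big| \E\big(Y_n^{(\beta)}\big) - c \big| \leq \E\big( |Y_n^{(\beta)} - c| \big) \leq 2\,\P\big( |Y_n^{(\beta)} - c| > \delta \big) + \delta,
\]
and letting $n \to \infty$ then $\delta \to 0$ yields $\E\big(Y_n^{(\beta)}\big) \to c$.

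Then I would carry out the two tail bounds. For the upper tail, $\{X_n > -\phi + \epsilon\} = \{M_n - \tfrac{1}{\theta^*}K_n > -(\phi-\epsilon)\log n\} \subseteq \{M_n - \tfrac{1}{\theta^*}K_n \geq -(\phi-\epsilon)\log n\}$, hence $\P(X_n > -\phi + \epsilon) \leq \E\big(Y_n^{(\phi-\epsilon)}\big) \to 0$ since $0 < \phi - \epsilon < \phi$. For the lower tail, $\{X_n < -\phi - \epsilon\}$ is precisely the complement of $\{M_n - \tfrac{1}{\theta^*}K_n \geq -(\phi+\epsilon)\log n\}$, so $\P(X_n < -\phi - \epsilon) = 1 - \E\big(Y_n^{(\phi+\epsilon)}\big) \to 0$ since $\phi + \epsilon > \phi$. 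Together these give $\P(|X_n + \phi| > \epsilon) \to 0$ for every $\epsilon \in (0,\phi)$, which is the asserted convergence in $\P$-probability.

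There is no genuine obstacle here: all the substance sits in Theorem~\ref{thm:logCorrection}. The only points needing a little care are the (harmless) bookkeeping between strict and non-strict inequalities inside the probabilities, and the justification that convergence in $\P$-probability of the bounded variables $Y_n^{(\beta)}$ passes to convergence of their expectations — which is exactly the place where uniform boundedness is used.
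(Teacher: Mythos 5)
Your argument is correct and is exactly the routine deduction the paper has in mind when it calls the corollary ``a direct consequence'' of Theorem~\ref{thm:logCorrection} (the paper gives no separate proof): pass from the quenched probabilities to annealed ones by the tower property, use boundedness of $\rmP_\calL(\cdot)\in[0,1]$ to upgrade convergence in $\P$-probability to convergence of expectations, and apply this with $\beta=\phi\mp\epsilon$ for the two tails. No gaps; the bookkeeping with strict versus non-strict inequalities and the reduction to $\epsilon\in(0,\phi)$ are handled correctly.
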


Most likely, the convergence cannot be strengthened to an almost sure result of the form $M_n = \frac{1}{\theta^*} K_n - \phi \log n + o_{\rmP_\calL}(\log n)$ \envAS In fact, introducing the (quenched) median of $M_n$:
\begin{equation}
  \label{eqn:defQuenchedMedian}
  m^Q_n = \sup\left\{ a \in \R : \rmP_\calL(M_n \geq a) \geq 1/2 \right\},
\end{equation}
we believe that $m^Q_n$ exhibits non-trivial $\log n$-scale fluctuations in $\P$-probability. Moreover, using the results of Fang \cite{Fan12}, it is established that under some additional assumptions $(M_n-m^Q_n)$ is tight. We discuss this tightness issue for BRWre in Section \ref{sec:tightness}.

The proof strategy for Theorem \ref{thm:logCorrection} is very similar to method used to study the maximal displacement in an homogeneous branching random walk, which can for example be found in \cite{AiS10}. We first introduce a result that links the the moments of additive functionals of the branching random walk with random walk estimates: the many-to-one lemma (Lemma~\ref{lem:manytoone}). However, the randomness of the environment implies that the random walk obtained here is a random walk in (time-inhomogeneous) random environment.

Using the many-to-one lemma, we then compute the first and second moment of the number of particles making a suitable excursion below a well-chosen barrier. This barrier is chosen such that the first and second moment are of the same order, therefore Markov and Paley-Zygmund inequalities allow to bound from above and from below the probability for $M_n$ to be larger than $\tfrac{1}{\theta^*}K_n - \phi \log n$. Finally, using a standard truncation argument we are able to bound with high probability the position of $M_n$ at time $n$.

The main impact of the random environment is that the random walk obtained in the many-to-one lemma is a random walk in random environment. Hence, one has to study the probability for a random walk to realize an excursion of length $n$ above a suitable barrier, which is done in Theorem~\ref{thm:randomExcursion}. This probability behaves as $n^{-\lambda + o_\P(1)}$, with $\lambda$ a constant that depends only of $\frac{\sigma_A}{\sigma_Q}$. This constant is directly related to the logarithmic correction of the BRWre, through the equation $\phi = \frac{\lambda}{\theta^*}$. This is similar to what happens in the time-homogeneous case: the probability for a random walk to make an excursion of length $n$ is of order $n^{-3/2}$, and the logarithmic correction of the branching random walk is $\frac{3}{2\theta^*}$.

The rest of the article is organized as follows. We introduce in the next Section \ref{sec:tightness} the many-to-one lemma, and in particular the time-inhomogeneous random walk associated to the BRWre. In Section \ref{sec:excursion}, we compute the probability for this random walk in random environment to make an excursion of length $n$. We use this result to prove Theorem \ref{thm:logCorrection} in Section \ref{sec:corrections}, using the method described above.

\section{The many-to-one lemma}
\label{sec:many-to-one}

We introduce the celebrated many-to-one lemma. It has been essential in studies of extremal behavior of branching random walks. It can be traced down to the early works of Peyrière \cite{Pey74} and Kahane and Peyrière \cite{KaP76}. Many variations of this result have been introduced, see e.g. \cite{BiK04}. In this article, we use a time-inhomogeneous version of this lemma, that can be found in \cite[Lemma 2.2]{Mal15a}. For all $n \geq 1$, we write $L_n$ for a realization of the point process with law $\calL_n$, and we define the probability measure $\mu_n$ on $\R$ by
\begin{equation}
  \label{eqn:defMun}
  \forall x \in \R, \quad \mu_n((-\infty,x]) = \rmE_\calL\left( \sum_{\ell \in L_n} \ind{\ell \leq x}e^{\theta^* \ell - \kappa_n(\theta^*)} \right),
\end{equation}
where we recall that $\kappa_n$ is the log-Laplace transform of $L_n$ given by \eqref{eqn:logLaplace} and $\theta^*$ by \eqref{eqn:defTheta}. Up to a possible enlargement of the probability space, we define a sequence $(X_n, n \in \N)$ of independent random variables, where $X_n$ has law $\mu_n$. We set $S_n = \sum_{j=1}^n X_j$. From now on, $\rmP_\calL$ stands for the joint law of the BRWre $(\T,V)$ and the random walk in random environment $S$, conditionally on the environment $\calL$.
\begin{lemma}[Many-to-one lemma]
\label{lem:manytoone}
For any $n \in \N$ and any measurable non-negative function $f : \R^n \to \R$, we have
\begin{equation}
  \label{eqn:manytoone}
  \rmE_\calL \left( \sum_{|u|=n} f(V(u_1),\ldots, V(u_n)) \right) = \rmE_\calL\left( e^{-\theta^* S_n + \sum_{k=1}^n \kappa_j(\theta^*)} f(S_1,\ldots S_n) \right), \quad \envAS
\end{equation}
\end{lemma}
To simplify notation, we introduce the process $(T_n, n\geq 0)$ given by
\begin{equation}
  \label{eqn:defT}
   T_n = \theta^* S_n - K_n.
\end{equation}
Now \eqref{eqn:manytoone} has a more compact form
\begin{equation}
  \label{eqn:manymodified}
  \rmE_\calL\left( \sum_{|u|=n} f\left(\theta^* V(u_j)-K_j, j \leq n\right) \right) = \rmE_\calL\left( e^{-T_n} f(T_j, j \leq n) \right).
\end{equation}
The process $T$ is a random walk in random environment: conditionally on the environment~$\calL$, $T_n$ is the sum of $n$ independent random variables, and the law of $T_j-T_{j-1}$ only depends on $\calL_j$. Note that under law $\P$, $T$ is simply a random walk.
\begin{remark}
\label{rem22}
Note that the process we call ``random walk in random environment'' is not a typical random walk in random environment. Indeed, this process is ``trivial'', in the sense that each random environment is used exactly once. In typical random walks in random environment, the random environment is either purely spatial or space-time. However, despite looking trivial, the probability for $T$ to make an excursion of length $n$ above $0$, conditionally on the random environment yields a non-trivial exponent inherited from the randomness of the environment.
\end{remark}

By \eqref{eqn:defTheta}, we have $\E(T_n)=0$. Moreover by \eqref{eqn:defMun}, we have
\[
  \rmE_\calL(X_j) = \kappa'_j(\theta^*) \text{ and } \rmVar_\calL(X_j) = \kappa''_j(\theta^*).
\]
Thus \eqref{eqn:defVariances} can be rewritten
\[
  \sigma_Q^2 = \E\left( \rmVar_\calL(T_1) \right) \quad \mathrm{and} \quad \sigma_A^2 = \Var\left( \rmE_\calL(T_1) \right).
\]
This confirms the heuristic description of $\sigma_Q^2$ as the mean of a quenched variance, ans $\sigma_A^2$ as the variance of a quenched mean.

Using the many-to-one lemma, we can prove that with high probability, every individual in the BRWre stays at all time below the environment-dependent path $n \mapsto \frac{1}{\theta^*} K_n + y$, for $y$ large enough. This explains why a key tool in the study of the asymptotic behavior of $M_n$ is the random ballot theorem, discussed in the next section.
\begin{lemma}
\label{lem:frontier}
Under the assumption \eqref{eqn:defTheta}, for any $y > 0$ we have
\[
  \rmP_\calL\left( \exists u \in \T : V(u) > \tfrac{1}{\theta^*} K_{|u|} + y \right) \leq e^{-\theta^* y},\quad \envAS
\]
\end{lemma}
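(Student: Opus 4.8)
The plan is to bound the probability of ever crossing the random barrier $x\mapsto\tfrac1{\theta^*}K_{x}+y$ by a sum, over the generation of \emph{first} crossing, of first-moment estimates, each of which is handled by the many-to-one lemma \eqref{eqn:manymodified}. Throughout, fix $y>0$ and work $\P$-a.s.\ on a realisation of the environment $\calL$ for which \eqref{eqn:manymodified} holds for every $n$ (this is a $\P$-a.s.\ event, being a countable intersection).

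First I would pass to the $T$-coordinates. For $u\in\T$ and $k\le|u|$ let $u_k$ denote the ancestor of $u$ at generation $k$; by \eqref{eqn:defT} the condition $V(u)>\tfrac1{\theta^*}K_{|u|}+y$ is equivalent to $\theta^*V(u)-K_{|u|}>\theta^*y$. Let $\mathcal{Z}_n$ be the random set of $u\in\T$ with $|u|=n$, $\theta^*V(u)-K_n>\theta^*y$, and $\theta^*V(u_k)-K_k\le\theta^*y$ for all $k<n$, i.e.\ the individuals crossing the barrier for the first time exactly at generation $n$. Whenever some individual crosses the barrier, the quantity $\theta^*V(u_k)-K_k$ along its ancestral line is defined for only finitely many $k$, so a first crossing generation exists; hence
\[
  \left\{\exists u\in\T:\ V(u)>\tfrac1{\theta^*}K_{|u|}+y\right\}=\bigcup_{n\ge1}\{\mathcal{Z}_n\neq\emptyset\},
\]
and by countable subadditivity and Markov's inequality applied to the non-negative integer-valued $\#\mathcal{Z}_n$,
\[
  \rmP_\calL\!\left(\exists u\in\T:\ V(u)>\tfrac1{\theta^*}K_{|u|}+y\right)\le\sum_{n\ge1}\rmP_\calL(\mathcal{Z}_n\neq\emptyset)\le\sum_{n\ge1}\rmE_\calL(\#\mathcal{Z}_n).
\]

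The key step is to evaluate $\rmE_\calL(\#\mathcal{Z}_n)$ via \eqref{eqn:manymodified} applied to the non-negative function $f(t_1,\dots,t_n)=\ind{t_n>\theta^*y}\prod_{k=1}^{n-1}\ind{t_k\le\theta^*y}$, which gives
\[
  \rmE_\calL(\#\mathcal{Z}_n)=\rmE_\calL\!\left(e^{-T_n}\,\ind{T_n>\theta^*y}\prod_{k=1}^{n-1}\ind{T_k\le\theta^*y}\right).
\]
On the event inside this expectation one has $T_n>\theta^*y$, hence $e^{-T_n}<e^{-\theta^*y}$, so $\rmE_\calL(\#\mathcal{Z}_n)\le e^{-\theta^*y}\,\rmP_\calL(\tau=n)$, where $\tau:=\inf\{n\ge1:T_n>\theta^*y\}$. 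Summing over $n$ and using that the events $\{\tau=n\}$ are pairwise disjoint,
\[
  \rmP_\calL\!\left(\exists u\in\T:\ V(u)>\tfrac1{\theta^*}K_{|u|}+y\right)\le e^{-\theta^*y}\sum_{n\ge1}\rmP_\calL(\tau=n)=e^{-\theta^*y}\,\rmP_\calL(\tau<\infty)\le e^{-\theta^*y},
\]
which is the assertion.

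I do not expect a genuine obstacle here: the only two points needing a word of care are that the decomposition by first crossing generation is exhaustive (used above) and that ``for each $n$, $\P$-a.s.'' upgrades to ``$\P$-a.s., for all $n$'' by countability. One could alternatively obtain the bound in a single stroke through an optional-stopping / stopping-line argument for the quenched additive martingale $W_n=\sum_{|u|=n}e^{\theta^*V(u)-K_n}$, which satisfies $\rmE_\calL(W_n)=1$ by \eqref{eqn:manymodified} applied to $f(t_j)=e^{t_n}$: on the event that the barrier is crossed, the contribution to $W$ of the first-crossing stopping line already exceeds $e^{\theta^*y}$, and Markov's inequality concludes. I would nonetheless keep the elementary generation-by-generation version, as it relies only on the many-to-one lemma exactly as stated.
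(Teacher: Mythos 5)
Your proof is correct and follows essentially the same route as the paper: a first-crossing decomposition bounded by first moments, the many-to-one identity \eqref{eqn:manymodified} applied to the indicator of a first barrier crossing, the bound $e^{-T_n}\leq e^{-\theta^* y}$ on that event, and summation of the disjoint first-passage probabilities. The only difference is cosmetic (you name the stopping time $\tau$ explicitly, while the paper writes the same bound as $e^{-\theta^* y}\rmP_\calL(\exists n: T_n>\theta^* y)$).
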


\begin{proof}
Using \eqref{eqn:manymodified}, the following classical upper bound holds for any branching random walk in time-inhomogeneous environment:
\begin{align*}
  &\rmP_\calL\left( \exists u \in \T : V(u) > \tfrac{1}{\theta^*} K_{|u|} + y \right)\\
  &\qquad \qquad \qquad\leq \rmE_\calL\left( \sum_{u \in \T} \ind{ V(u) - \tfrac{1}{\theta^*} K_{|u|} > y} \ind{ V(u_j) - \tfrac{1}{\theta^*} K_j \leq y, j < |u| } \right)\\
  &\qquad \qquad \qquad\leq \sum_{n=1}^{ \infty} \rmE_\calL\left( \sum_{|u|=n} \ind{\theta^* V(u) - K_n > \theta^* y} \ind{\theta^* V(u_j) - K_j \leq \theta^* y, j < n} \right)\\
  &\qquad \qquad \qquad\leq \sum_{n=1}^{ \infty} \rmE_\calL\left( e^{-T_n} \ind{T_n > \theta^* y} \ind{T_j \leq \theta^* y, j < n} \right)\\
  &\qquad \qquad \qquad\leq e^{-\theta^* y} \rmP_\calL\left( \exists n \in \N : T_n > \theta^* y \right) \leq e^{-\theta^* y}, \quad \envAS \qedhere
\end{align*}
\end{proof}

Due to the time-inhomogeneity, it will be useful to consider time-shifts of the environment, and a shifted version of \eqref{eqn:manytoone}. For $k \in \N$,  write $\rmP^k_\calL$ for the law of the branching random walk with the random environment $(\calL_{j+k},j \in \N)$. By convention we assume that under $\rmP^k_\calL$, the random walk $S$ is $S_n=X_{k+1}+\cdots + X_{k+n}$. In this scenario \eqref{eqn:manytoone} writes as
\begin{equation}
  \label{eqn:manytooneGeneral}
  \rmE^k_\calL \left( \sum_{|u|=n} f(V(u_1),\ldots, V(u_n)) \right) = \rmE^k_\calL\left( e^{-\theta^* S_n + \sum_{j=k+1}^{k+n} \kappa_j(\theta^*)} f(S_1,\ldots S_n) \right), \quad \envAS
\end{equation}
Similarly, we assume that under $\rmP^k_\calL$ the process $T$ is given by $T_n = \theta^* S_n - \sum_{j=k+1}^{k+n} \kappa_j(\theta^*)$.

\section{Ballot theorem for a random walk in random environment}
\label{sec:excursion}

It is well-known, see for example \cite{AiS10}, that the constant $\frac{3}{2}$ in \eqref{eqn:homogeneousMain} is directly related to the exponent of the ballot theorem problem i.e., for any centered random walk $(S_n)$ with finite variance
\[
  \P(S_n \geq -\log n, S_j \leq 0 , j \leq n) \approx \frac{C}{n^{3/2}}.
\]
For review of ballot-like theorems, one can look at \cite{ABR08}. To prove Theorem \ref{thm:logCorrection}, we compute similar quantities for random walks in random environment. More precisely, we compute the probability for $T$ to make a negative excursion of length $n$, conditionally on the environment $\calL$. We devoted a companion paper \cite{Mallein:2015aa} to study the asymptotic behavior of the probability for a random walk in random environment to stay positive during $n$ units of time and to study the function $\gamma$, defined in \eqref{eq:defgamma}. We start with a translation of \cite[Theorem 1.11]{Mallein:2015aa} to our notation.

We recall again that in this article, when we consider random walks in random environment that are not the typical case of ``space-not-time'' random environments, in which a given reproduction law is used multiple times (cf Remark~\ref{rem22}). For typical random walks in random environment, the process considered is a time-homogeneous Markov process whose semigroup is random, while the random walks in random environment we study are sums of independent random variables.

\begin{theorem}\label{thm:BRWBasicExtended}
Let $T$ be as in \eqref{eqn:defT}, $(x_n) \in \R_+^\N$ and $(f_n) \in \R^\N$ such that
\[
  \lim_{n \to  \infty} x_n =  \infty,\ \lim_{n \to  \infty} \frac{\log x_n}{\log n}=0 \text{ and } \limsup_{n\to  \infty} \frac{|f_n|}{n^{1/2-\epsilon}}=0,
\]
for some $\epsilon > 0$. For any $0 \leq a < b \leq  \infty$ we have
\begin{equation*}
  \lim_{n\to \infty} \frac{\log\rmP_\calL\left( x_n + T_n \in [a n^{1/2},bn^{1/2}], x_n + T_j \geq f_j, j \leq n \right)}{\log n} = -\gamma\left(\tfrac{\sigma_A}{\sigma_Q}\right),\quad \envAS
\end{equation*}
\end{theorem}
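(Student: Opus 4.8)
The plan is to deduce Theorem~\ref{thm:BRWBasicExtended} from the corresponding statement in the companion paper \cite[Theorem 12]{Mallein:2015aa} by verifying that the random walk in random environment $T$ defined in \eqref{eqn:defT} satisfies the hypotheses of that result. First I would recall the structure of $T$: conditionally on $\calL$, it is a sum of independent increments $T_j - T_{j-1} = \theta^* X_j - \kappa_j(\theta^*)$ whose law depends only on $\calL_j$, with quenched mean $\rmE_\calL(T_j - T_{j-1}) = \theta^* \kappa_j'(\theta^*) - \kappa_j(\theta^*)$ and quenched variance $\rmVar_\calL(T_j - T_{j-1}) = {\theta^*}^2 \kappa_j''(\theta^*)$. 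Under $\P$ these are i.i.d.\ pairs, the annealed mean $\E(T_j - T_{j-1})$ vanishes by \eqref{eqn:defTheta}, the ``quenched variance'' averages to $\sigma_Q^2 \in (0,+\infty)$ and the ``quenched mean'' has variance $\sigma_A^2 \in [0,+\infty)$ by \eqref{eqn:defVariances}. The remaining integrability hypotheses needed for \cite[Theorem 12]{Mallein:2015aa}---finiteness of a second (annealed) moment of the increments and an exponential integrability of the quenched increment distribution---follow from \eqref{eqn:secondMoment} and \eqref{eqn:integrabilityQuenched} respectively: bounding $|\theta^* X_j - \kappa_j(\theta^*)|$ in terms of $\sum_{\ell \in L_j}(1 + e^{\theta^* \ell})$ for the former, and using that $\mu_j$ has a density proportional to $e^{\theta^* \ell - \kappa_j(\theta^*)}$ with respect to the intensity of $L_j$ so that a shift by $\pm\mu$ in the exponent is controlled by \eqref{eqn:integrabilityQuenched} for the latter.

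Next I would match the event. The companion theorem is phrased for a random walk in random environment started at height $x_n$ and required to stay above a slowly-growing barrier while ending in a window of width of order $n^{1/2}$; this is precisely the event
\[
  \{ x_n + T_n \in [a n^{1/2}, b n^{1/2}],\ x_n + T_j \geq f_j,\ j \leq n \}
\]
appearing in the statement, under the stated growth conditions $x_n \to +\infty$, $\log x_n = o(\log n)$ and $|f_n| = o(n^{1/2 - \epsilon})$. The output of \cite[Theorem 1]{Mallein:2015aa}, recalled in the excerpt, is that the relevant persistence exponent is $\gamma(\sigma_A/\sigma_Q)$, and \cite[Theorem 12]{Mallein:2015aa} asserts exactly that the logarithm of this probability divided by $\log n$ converges $\P$-a.s.\ to $-\gamma(\sigma_A/\sigma_Q)$. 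So the proof is essentially a dictionary: translate the parametrisation of \cite{Mallein:2015aa} (where the environment is presumably encoded as an i.i.d.\ sequence of pairs (increment law, or mean/variance data)) into the present $\theta^*$-tilted setting, checking that the two Brownian parameters $\sigma_Q^2$ (diffusivity of $B$) and $\sigma_A^2$ (diffusivity of the environmental drift $W$) correspond, and that the normalisation $\gamma$ defined via \eqref{eq:defgamma} is scale-invariant enough that only the ratio $\sigma_A/\sigma_Q$ enters.

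I expect the main obstacle to be purely bookkeeping: making sure the hypotheses of \cite[Theorem 12]{Mallein:2015aa} are stated in a form that our assumptions \eqref{eqn:secondMoment}--\eqref{eqn:integrabilityQuenched} genuinely imply, in particular the quenched exponential-moment condition, which is the one place where the structure of the tilted measure $\mu_j$ and the bound \eqref{eqn:integrabilityQuenched} must be used carefully rather than invoked as a black box. A secondary point is the reduction from a general initial height $x_n$ with general barrier $(f_j)$ to whatever normalised form the companion paper proves; this should be handled by a standard translation argument (replace $T_j$ by $x_n + T_j - f_j$ is not quite legitimate since $f$ need not be a random-walk path, so instead one brackets the event between two events with constant-order barriers and invokes monotonicity of the exponent, using that $\gamma$ is continuous so the slowly-varying perturbations $x_n$ and $f_n$ do not affect the limit). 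Once these integrability and translation points are settled, the statement follows immediately from \cite[Theorem 12]{Mallein:2015aa}.
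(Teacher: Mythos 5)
Your proposal matches the paper's treatment: the paper gives no separate proof of Theorem~\ref{thm:BRWBasicExtended}, presenting it simply as ``a translation of \cite[Theorem 12]{Mallein:2015aa} to our notation,'' which is exactly the reduction you describe. Your additional bookkeeping---identifying the quenched mean and variance of the increments with $\sigma_A^2$ and $\sigma_Q^2$ via \eqref{eqn:defVariances} and checking the integrability hypotheses from \eqref{eqn:secondMoment} and \eqref{eqn:integrabilityQuenched}---is the natural fleshing-out of that citation and is consistent with how the paper uses the companion result.
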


The proof of this theorem is rather long, but we give here a quick description of the main steps of proof. First, using KMT-type coupling between random walks and Brownian motions, we prove that
\[
  \rmP_\calL\left( T_j \geq 0, j \leq n\right) \approx \P\left( \sigma_Q B_s \geq \sigma_A W_s - 1, s \leq n \middle| W \right),
\]
where $B$ and $W$ are two independent Brownian motions, the Brownian motion $\sigma_A W$ approaching the random walk (with respect to the environment) $(\rmE_\calL(T_n), n \in \N)$ and $\sigma_Q B$ replacing the sum of independent centered random variables $(T_n - \rmE_\calL(T_n), n \in \N)$. The convergence in \eqref{eq:defgamma} then proves Theorem \ref{thm:BRWBasicExtended}.

This convergence is a consequence of the fact that
\[
  \left( \log  \P\left(B_s \leq \beta W_s, 1 \leq s \leq e^t \middle| W \right), t \geq 0 \right)
\]
is a subadditive sequence, hence Kingman's subadditive ergodic theorem guarantees the existence and finiteness of $\gamma(\beta)$. Moreover, one can observe that
\[
  \P\left(B_s \leq \beta W_s, e^u \leq s \leq e^{t+u} \middle| W \right) \egaldistr \P\left(B_s \leq \beta W_s, 1 \leq s \leq e^t \middle| W \right),
\]
therefore the behavior of $ \log  \P\left(B_s \leq \beta W_s, 1 \leq s \leq e^t \middle| W \right)$ should be similar to the sum of $t$ i.i.d. random variables. Hence, we conjecture the existence a central limit type theorem can be expected, of the form
\begin{equation}
  \label{eqn:conjecture}
  \lim_{t \to \infty} \frac{\log \P\left(B_s \leq \beta W_s, 1 \leq s \leq e^t \middle| W \right) - \gamma(\beta) \log t }{(\log t)^{1/2}} = \mathcal{N}(0, \rho(\beta)) \quad \text{in law.}
\end{equation}

Observe that the statement of Theorem \ref{thm:BRWBasicExtended} also holds for $-T$. More precisely, under the same assumptions, \envAS we have
\begin{equation}
\lim_{n\to \infty}\frac{\log\rmP_\calL\left(-x_n+T_n\in[-bn^{1/2}, -a n^{1/2},], -x_n+T_{j}\leq f_j, j\leq n \right)}{\log n} = -\gamma\left(\tfrac{\sigma_A}{\sigma_Q}\right),\label{eq:extendedRWoverRWclaimReversed}
\end{equation}

\begin{remark}
The results obtained in Theorem \ref{thm:BRWBasicExtended} and \eqref{eq:extendedRWoverRWclaimReversed} can be rephrased as
\[
  \rmP_\calL\left( T_j \geq 0, j \leq n \right) = n^{- \gamma(\tfrac{\sigma_A}{\sigma_Q})(1 + o(1))}, \text{ as } n \to \infty, \enskip \envAS
\]
If the result \eqref{eqn:conjecture} holds, then this result cannot be strengthened to a ballot-type theorem similar to the homogeneous case, i.e. the existence of a (random) constant $C>0$ such that the probability for $T$ to stay positive is bounded by $C n^{-\gamma}$ for $n$ large enough.\end{remark}

The exponent $\gamma:=\gamma\left(\tfrac{\sigma_A}{\sigma_Q}\right)$ is used to compute the probability for $T$ to make a negative excursion of length $n$. To do so, we observe that an excursion of length $n$ can be divided into three parts. Between times $0$ and $n/3$, the random walk stays negative, which happens with probability $n^{-\gamma}$. Similarly, the end part between times $2n/3$ and $n$, seen from backward is a random walk in random environment staying positive, happening with probability $n^{-\gamma}$. Finally, the part between times $n/3$ and $2n/3$ is a random walk joining these two paths, which has probability $n^{-1/2}$ by local CLT. Consequently, writing
\begin{equation}
  \label{eqn:defLambda}
  \lambda := 2 \gamma\left(\tfrac{\sigma_A}{\sigma_Q}\right) + \frac{1}{2},
\end{equation}
we expect $\rmP_\calL\left( T_n \leq 1, T_j \geq 0, j \leq n\right) \approx n^{-\lambda}$. However, working with random environments yields additional difficulties.
\begin{theorem}
\label{thm:randomExcursion}
Let $T$ be as in \eqref{eqn:defT}. We set $(x_n), (a_n) \in \R_+^\N$ such that
\[
  a_n \leq x_n, \ \liminf_{n \to  \infty} \frac{a_n}{\log n} > 0 \text{  and  } \lim_{n \to  \infty} \frac{\log x_n}{\log n} = 0.
\]
Let $\alpha \in [0,1/2)$, we write $r_{n,j} = \min(j,n-j)^\alpha$ for $0 \leq j \leq n$. For any $\epsilon > 0$ we have
\begin{equation}
  \label{eqn:randomUpperExcursion}
  \lim_{n \to  \infty} \P\left( \sup_{x , y \in [0, x_n]} \sup_{k,k' \leq x_n} \rmP^k_\calL\left( T_{n-k-k'} \geq x-y, T_j \leq x, j \leq n-k-k' \right) \leq n^{- \lambda+\epsilon} \right) = 1,
\end{equation}
\begin{equation}
  \label{eqn:randomLowerExcursion}
  \lim_{n \to  \infty} \P\left( \inf_{x,y \in [a_n, x_n]} \rmP_\calL\left( T_{n} \geq x-y, T_j \leq x-r_{n,j}, j \leq n \right) \geq n^{- \lambda - \epsilon} \right) = 1.
\end{equation}
\end{theorem}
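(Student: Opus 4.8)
\textbf{Proof proposal for Theorem \ref{thm:randomExcursion}.}

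The plan is to build the negative excursion of $T$ out of three independent pieces, following the heuristic already sketched before the statement, and to control each piece using Theorem \ref{thm:BRWBasicExtended} and the local central limit theorem for random walks in random environment. For the upper bound \eqref{eqn:randomUpperExcursion}, I would first reduce the supremum over starting points $x$, barrier heights $y$, and shifts $k,k'$ to a countable (in fact polynomial-in-$n$) net: since all these quantities are bounded by $x_n$ with $\log x_n = o(\log n)$, discretising each at scale $1$ loses only a factor $x_n^{4}$, which is $n^{o(1)}$ and hence harmless on the $\log n$ scale. For a fixed choice of parameters, I would split the time interval $\{0,\dots,n-k-k'\}$ at $m/3$ and $2m/3$ with $m = n-k-k'$; on the first third I apply \eqref{eq:extendedRWoverRWclaimReversed} to bound the probability of staying below $x$ by $n^{-\gamma+o(1)}$, on the last third I apply the time-reversed version (viewing the walk run backwards from the right endpoint, using that the environment is i.i.d.\ so the reversed environment has the same law), and on the middle third I use a crude bound — at worst the probability of any single position interval, which is $O(m^{-1/2})$ by the local CLT, uniformly in the environment thanks to the moment assumptions on $\kappa_1''$. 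Multiplying and taking logarithms gives the exponent $-\lambda+\epsilon$; the union bound over the net, together with a Borel–Cantelli argument along the polynomial net, upgrades this to the $\P$-probability statement.

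For the lower bound \eqref{eqn:randomLowerExcursion}, the strategy is the same three-block decomposition but now I need a genuine lower bound on each block, which is where the barrier $r_{n,j} = \min(j,n-j)^\alpha$ with $\alpha < 1/2$ enters: it leaves room of polynomial-but-sublinear width for the walk to manoeuvre, so that the CLT-type lower bounds are not destroyed at the junctions. On the first third I would use Theorem \ref{thm:BRWBasicExtended} with the forbidden region shifted down by $r_{n,j}$ and the endpoint constrained to a window of width $\asymp n^{1/2}$ below $x$; this costs $n^{-\gamma-o(1)}$ and additionally forces $T_{m/3}$ to lie in a good region. The middle third is a local-CLT lower bound: conditionally on the environment, $T$ restricted there is a sum of $\asymp m$ independent increments with uniformly controlled variances and tails (by \eqref{eqn:secondMoment} and \eqref{eqn:integrabilityQuenched}), so it hits any prescribed $n^{1/2}$-window with probability $\geq c m^{-1/2}$, and staying below the slowly-decreasing barrier $x - r_{n,j}$ on that stretch costs only a constant factor because the barrier is still within $o(n^{1/2})$ of a level the bridge naturally respects. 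The last third is handled by the reversed estimate \eqref{eq:extendedRWoverRWclaimReversed}, again at cost $n^{-\gamma-o(1)}$. Crucially the condition $\liminf a_n/\log n > 0$ guarantees the endpoint $x - y$ sits at distance $\geq c\log n$ from $x$, so there is enough vertical budget to thread all three blocks; one then multiplies the three lower bounds and takes logarithms.

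The main obstacle I expect is the uniformity over the environment and over the parameters $x,y,k,k'$ simultaneously. Theorem \ref{thm:BRWBasicExtended} is an almost-sure statement for a \emph{fixed} sequence $(x_n)$, whereas here I need it to hold, with the right exponent, uniformly over a growing family of starting points and over all polynomial time-shifts of the environment at once; getting from the one-parameter almost-sure statement to this uniform-in-probability statement requires a careful monotonicity/envelope argument (sandwiching an arbitrary admissible $(x_n)$ between two reference sequences) combined with a union bound over the shifts $k \leq x_n$, using that $x_n = n^{o(1)}$ so the number of shifts is subpolynomial. Managing the interface between the barrier-avoidance estimates and the middle local-CLT block — ensuring the ``good region'' at times $m/3$ and $2m/3$ produced by the excursion estimates is exactly the window into which the bridge estimate can land — is the other delicate point, and is precisely what dictates the choice $\alpha < 1/2$ and the $r_{n,j}$ shape in the statement.
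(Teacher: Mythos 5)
Your global strategy coincides with the paper's: cut the excursion at $p=\floor{n/3}$ and $2p$, control the first block with \eqref{eq:extendedRWoverRWclaimReversed}, the last block by time reversal (the reversed walk $\hat T^{n-p-k'}$ having, by the i.i.d.\ structure of the environment, the law of $T$ under $\P\otimes\rmP_\calL$, which is exactly why that block only converges in $\P$-probability), and the middle block by a Berry--Esseen local estimate; the lower bound is the same decomposition with the barrier $r_{n,j}$ and the endpoint window of width $a_n\gtrsim\log n$ providing the room needed at the junctions. Also note that the supremum over $0\le x,y\le x_n$ costs nothing once time is reversed: the events are contained in a single worst-case event with barrier $x_n$ (resp.\ $-x_n$), so no discretisation net is needed there.

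The genuine gap is in how you propose to get uniformity over the shifts $k,k'\le x_n$: a union bound over the shifts cannot work as stated. The only available input, Theorem \ref{thm:BRWBasicExtended}, is a qualitative limit (a.s.\ or in probability) with no rate; for fixed $n$ you would be summing $x_n\to+\infty$ identically distributed probabilities $q_n\to 0$ whose decay you cannot quantify, and $x_n q_n$ need not vanish, nor is there any pathwise monotonicity in $k$ to fall back on. The paper's substitute is the quenched sandwich of Lemma \ref{lem:uniformRwOverRw}: by the Markov property at time $k$, for every environment and every $k\le t_n$,
\[
\rmP^k_\calL\left( T_j \geq -x_n - j^\alpha,\, j \leq n-k \right) \leq \frac{\rmP_\calL\left( T_j \geq -x_n-\log t_n - j^\alpha,\, j \leq n \right)}{\rmP_\calL\left( T_j \geq -\log t_n,\, j \leq t_n \right)},
\]
so the qualitative theorem is applied only to two unshifted probabilities, and the bound is uniform in $k$ pathwise. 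Relatedly, your Borel--Cantelli upgrade along the net aims at an almost sure statement that is false here: the backward block \eqref{eq:backward} converges only in $\P$-probability (this is the content of Lemma \ref{lem:backward} and the remark preceding it), and no upgrade is needed since the conclusion is itself in probability --- one simply intersects three events of probability tending to one. Finally, in the lower bound, the claim that keeping the middle bridge below the barrier ``costs only a constant factor'' glosses over the quenched drift $\rmE_\calL(T_j)$, whose fluctuations are of order $n^{1/2}$; this is precisely what Lemma \ref{lem:lltBelowWithWall} handles, by placing the wall at level $bn^{1/2}$ with $b$ large depending on $\eta$, obtaining the estimate on an environment event of probability $1-\eta$, and letting $\eta\to 0$ at the very end.
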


We stress that the mode of convergence in Theorem \ref{thm:randomExcursion} cannot be strengthen to a \envAS result. We will see that the convergence in the forthcoming \eqref{eq:backward} holds only in probability. For the moment, as an illustration, we present a simple lemma concerning an analogous phenomenon for the Brownian motion over a Brownian motion path.
	
\begin{lemma}
\label{lem:backward}
For all $\beta \in \R$, we have
\begin{align}
  &\lim_{t \to  \infty} \frac{1}{\log t}\log \P\left(\left. B_s+1 \geq \beta( W_t - W_{t-s}), s \leq t \right|W \right) = -\gamma(\beta) \quad \text{in } \P {-probability}\nonumber\\
  &\limsup_{t \to  \infty} \frac{1}{\log t}\log \P\left(\left. B_s+1 \geq \beta(W_t - W_{t-s}), s \leq t \right|W \right)\geq -1/2, \quad \envAS \label{eqn:limsup}\\
  &\liminf_{t \to  \infty} \frac{1}{\log t}\log \P\left(\left. B_s+1 \geq \beta(W_t - W_{t-s}), s \leq t \right|W \right)\leq -\beta^2/2, \quad \envAS \label{eqn:liminf}
\end{align}
\end{lemma}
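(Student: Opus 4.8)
The plan is to treat the three assertions separately, abbreviating $p_t(\beta):=\P\bigl(B_s+1\ge\beta(W_t-W_{t-s}),\ s\le t\mid W\bigr)$ and $\overline\Phi(x):=\P(\calN(0,1)>x)$. First I would reduce to $\beta\ge 0$: the map $W\mapsto -W$ preserves Wiener measure and sends $p_t(\beta)$ to $p_t(-\beta)$, so it leaves the law of $\bigl(\tfrac1{\log t}\log p_t(\beta)\bigr)_t$ unchanged and carries $\P$-a.s.\ events to $\P$-a.s.\ events; hence the statement for $\beta$ and for $-\beta$ are equivalent (with the convention $\gamma(\beta):=\gamma(|\beta|)$), and we may assume $\beta\ge 0$. \textbf{Convergence in probability.} The key observation is that for each \emph{fixed} $t>0$ the time-reversed path $(W_t-W_{t-s})_{0\le s\le t}$ is again a standard Brownian motion on $[0,t]$. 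Writing $F_t(w):=\P(B_s\ge\beta w_s-1,\ s\le t)$ for the deterministic measurable functional on $C([0,t])$ obtained by integrating out $B$ (which is independent of $W$), we have $p_t(\beta)=F_t\bigl((W_t-W_{t-s})_{s\le t}\bigr)$ while the quantity in \eqref{eq:defgamma} equals $F_t\bigl((W_s)_{s\le t}\bigr)$; by time-reversal invariance these two random variables have the same law for every $t$. Since $\tfrac1{\log t}\log F_t\bigl((W_s)_{s\le t}\bigr)\to-\gamma(\beta)$ $\P$-a.s.\ by \eqref{eq:defgamma}, and convergence in probability to a constant depends only on the one-dimensional marginals, $\tfrac1{\log t}\log p_t(\beta)\to-\gamma(\beta)$ in $\P$-probability.

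\textbf{Lower bound on the limsup \eqref{eqn:limsup}.} I would exhibit a random sequence $t_k\to+\infty$ along which $p_{t_k}(\beta)$ is essentially $t_k^{-1/2}$. Take $t_k$ to be a measurable choice of minimiser of $W$ on $[0,k]$; since $\inf_{[0,k]}W\to-\infty$ we have $t_k\to+\infty$ a.s.\ and $W_{t_k}=\inf_{[0,t_k]}W$, so $\beta(W_{t_k}-W_{t_k-s})\le 0$ for all $s\le t_k$. Hence $p_{t_k}(\beta)\ge\P(\inf_{[0,t_k]}B\ge -1\mid W)=\P(\inf_{[0,t_k]}B\ge -1)$, which by the reflection principle equals $2\Phi(t_k^{-1/2})-1=t_k^{-1/2+o(1)}$; therefore $\limsup_{t\to+\infty}\tfrac1{\log t}\log p_t(\beta)\ge\liminf_k\tfrac1{\log t_k}\log p_{t_k}(\beta)=-\tfrac12$.

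\textbf{Upper bound on the liminf \eqref{eqn:liminf}.} This I expect to be the delicate step. Keeping only the constraint at a single intermediate time $u\in(0,t)$ gives, after conditioning on $W$, $p_t(\beta)\le\P\bigl(B_u+1\ge\beta(W_t-W_{t-u})\mid W\bigr)=\overline\Phi\!\bigl(\tfrac{\beta(W_t-W_{t-u})-1}{\sqrt u}\bigr)$. I would then choose $t_k=k^2$ and $u_k=k$, so that the increments $\Delta_k:=W_{t_k}-W_{t_k-u_k}$ live over the pairwise disjoint intervals $[k^2-k,k^2]$ and are therefore independent, centred, of variance $u_k=k$, with $\P\bigl(\Delta_k\ge\sqrt{u_k\log t_k}\bigr)=\overline\Phi(\sqrt{2\log k})\asymp(k\sqrt{\log k})^{-1}$, a divergent series. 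By the second Borel--Cantelli lemma, a.s.\ $\Delta_k\ge\sqrt{u_k\log t_k}$ for infinitely many $k$, and along such $k$, using $u_k\to+\infty$ and $\overline\Phi(x)\le e^{-x^2/2}$ for $x\ge 0$, $p_{t_k}(\beta)\le\overline\Phi\bigl(\beta\sqrt{\log t_k}-u_k^{-1/2}\bigr)\le\exp\!\bigl(-\tfrac{\beta^2}{2}\log t_k+o(\log t_k)\bigr)$, whence $\liminf_{t\to+\infty}\tfrac1{\log t}\log p_t(\beta)\le-\tfrac{\beta^2}{2}$. The main work is the calibration of $u_k$ and of the spacing of the $t_k$: $u_k$ must tend to infinity so that the additive constant $1$ becomes negligible, yet both $u_k$ and the gaps $t_{k+1}-t_k$ must remain small compared with $t_k$ so that, over disjoint (hence independent) increments, the probability that $\Delta_k$ deviates by $\sqrt{\log t_k}$ standard deviations still sums to infinity; the choice $t_k\asymp k^2$, $u_k\asymp t_k^{1/2}$ works, but any scheme meeting these constraints would do.
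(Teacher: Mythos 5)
Your proof is correct, and it follows precisely the route the paper indicates before declaring that it skips the details: equality in law at each fixed $t$ between the reversed increment path $(W_t-W_{t-s})_{s\le t}$ and $W$ (which is exactly why only convergence in $\P$-probability survives), the running minimum of $W$ to produce a backward path that stays nonpositive for \eqref{eqn:limsup}, and a second Borel--Cantelli argument over the independent increments $W_{k^2}-W_{k^2-k}$ for \eqref{eqn:liminf}. The only blemishes are cosmetic: in the limsup step the asserted equality $\liminf_k\frac{1}{\log t_k}\log p_{t_k}(\beta)=-\tfrac12$ should just be the inequality $\geq-\tfrac12$ (which is all you use), and the Gaussian tail bound in the last step needs $\beta>0$ (for $\beta=0$ the claim \eqref{eqn:liminf} is trivial), with $\gamma(\beta)$ read as $\gamma(|\beta|)$ for $\beta<0$ as you state.
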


Note the constants we obtain in \eqref{eqn:limsup} and \eqref{eqn:liminf} are non-optimal. In particular, \eqref{eqn:liminf} gives no information for $\beta \in (-1,1)$. The reason of fluctuations on $\log n$ scale is that the environment ``seen from backward'' every awhile is particularly favorable or  unfavorable for the random walk to stay non-negative. More precisely, one can make the following observation: almost surely, infinitely often the Brownian motion seen from backward stays negative, or grows faster than $t$ during $\log t$ units of time. As this result plays no role in the rest of the article, we skip the detailed proof.

To prove this theorem, we first observe that Theorem \ref{thm:BRWBasicExtended} can be easily extended to obtain uniform upper and lower bounds.
\begin{lemma}
\label{lem:uniformRwOverRw} Let $(x_n),(t_n) \in \R_+^\N$ be such that
\[  \lim_{n \to  \infty} x_n = \lim_{n \to  \infty} t_n =  \infty, \ \lim_{n \to  \infty} \frac{\log x_n}{\log n} = \lim_{n \to  \infty} \frac{\log t_n}{\log n} = 0
\]
For any $\alpha \in [0,1/2)$, we have
\[
  \lim_{n \to  \infty} \sup_{k \leq t_n} \frac{\log \rmP^k_{\calL}\left(T_j \geq -x_n - j^\alpha, j \leq n\right)}{\log n} = - \gamma\left(\frac{\sigma_A}{\sigma_Q}\right), \quad \envAS
\]
\end{lemma}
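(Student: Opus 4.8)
The plan is to prove separately the two halves of the claim, namely
$\liminf_n\sup_{k\le t_n}(\cdots)\ge -\gamma(\sigma_A/\sigma_Q)$ and $\limsup_n\sup_{k\le t_n}(\cdots)\le -\gamma(\sigma_A/\sigma_Q)$; essentially all the work is in the second one. For the lower bound I would bound the supremum from below by the $k=0$ term and apply Theorem \ref{thm:BRWBasicExtended} with $a=0$, $b=+\infty$ (so that the window $[an^{1/2},bn^{1/2}]$ degenerates to $[0,+\infty)$), starting point $x_n$, and barrier $f_j=-x_n-j^\alpha$. The hypotheses hold: $x_n\to+\infty$ with $\log x_n/\log n\to 0$, while $|f_n|=x_n+n^\alpha=o(n^{1/2-\epsilon'})$ for any $\epsilon'\in(0,\tfrac12-\alpha)$ since $\alpha<\tfrac12$ (and the intermediate barriers satisfy the same bound). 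Thus $\tfrac1{\log n}\log\rmP_\calL(x_n+T_n\ge0,\ x_n+T_j\ge f_j,\ j\le n)\to-\gamma(\sigma_A/\sigma_Q)$ \envAS, which already gives the $\liminf$ and, a fortiori, shows $j^\alpha\le n^\alpha=o(\sqrt n)$ does not affect the exponent.

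For the upper bound I need: for every $\epsilon>0$, \envAS for all large $n$, $\rmP^k_\calL(T_j\ge -x_n-j^\alpha,\ j\le n)\le n^{-\gamma(\sigma_A/\sigma_Q)+\epsilon}$ \emph{simultaneously} for all $k\le t_n$. A direct union bound over $k$ of the almost-sure statements of Theorem \ref{thm:BRWBasicExtended} does not suffice, since each shift carries its own vanishing-but-unquantified fluctuation and $t_n\to+\infty$. Instead I would use a quantitative (concentration) version of Theorem \ref{thm:BRWBasicExtended}, available from \cite{Mallein:2015aa}, of the form: for every $\eta,\epsilon>0$ there are $N$ and $\rho=\rho(\epsilon)>0$ with $\P\big(\rmP_\calL(T_j\ge -y-j^\alpha,\ j\le n)\ge n^{-\gamma(\sigma_A/\sigma_Q)+\epsilon}\big)\le n^{-\rho}$ for all $n\ge N$ and $1\le y\le n^\eta$. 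Since $(\calL_{k+j})_{j\ge1}\egaldistr\calL$, the same bound holds with $\rmP^k_\calL$, for each fixed $k$. Passing to the geometric subsequence $n_i=2^i$ and setting $y_i=\max_{m\le n_{i+1}}x_m$ (still sub-polynomial in $n_i$), the union bound gives
\[
  \P\Big(\sup_{k\le t_{n_i}}\rmP^k_\calL\big(T_j\ge -y_i-j^\alpha,\ j\le n_i\big)\ge n_i^{-\gamma(\sigma_A/\sigma_Q)+\epsilon}\Big)\le t_{n_i}\,n_i^{-\rho},
\]
which is summable in $i$ because $t_n$ is sub-polynomial; Borel--Cantelli settles the bound along $(n_i)$. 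For $n\in(n_i,n_{i+1}]$ the event $\{T_j\ge -x_n-j^\alpha,\ j\le n\}$ is contained in $\{T_j\ge -y_i-j^\alpha,\ j\le n_i\}$ (fewer steps, lower barrier, using $x_n\le y_i$), so monotonicity transfers the bound to every $n$, with $\log n_i\sim\log n$ absorbing the $\epsilon$-loss.

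The crux is the quantitative input invoked above. The bare almost-sure convergence in Theorem \ref{thm:BRWBasicExtended} cannot control $\sup_{k\le t_n}$, because for each $n$ a different, atypically favourable shift $k(n)\le t_n$ of the environment could make $\rmP^{k(n)}_\calL(\cdots)$ anomalously large, and this is exactly why the hypothesis $\log t_n/\log n\to0$ is needed. A naive second-moment (annealing) estimate is also insufficient: it only produces the exponent $\tfrac12=\gamma(0)$ instead of $\gamma(\sigma_A/\sigma_Q)$ when $\sigma_A>0$, since the environment fluctuations are shared by the averaged copies and do not add up over higher moments either. One must therefore rely on the finer multiscale analysis of \cite{Mallein:2015aa}; granting the resulting tail bound, the remaining bookkeeping — stationarity of the environment shift, union over the sub-polynomially many shifts, geometric subsequence, and monotone interpolation — is routine, and the argument for $-T$ (the reversed version) is identical.
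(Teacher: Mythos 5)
Your lower bound is essentially the paper's (a single application of Theorem \ref{thm:BRWBasicExtended} at $k=0$), up to a small slip: since the starting point $x_n$ already appears in the theorem's event, the barrier should be $f_j=-j^\alpha$, not $f_j=-x_n-j^\alpha$; with your choice the theorem's event is not contained in $\{T_j\geq -x_n-j^\alpha,\ j\leq n\}$, so it does not bound the lemma's probability from below. This is trivially repaired.

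The upper bound, however, contains a genuine gap. You rightly observe that the bare \envAS statement of Theorem \ref{thm:BRWBasicExtended} cannot be union-bounded over the $t_n$ shifts, but you then resolve this by postulating a quantitative tail estimate of the form $\P\left(\rmP_\calL\left(T_j\geq -y-j^\alpha,\ j\leq n\right)\geq n^{-\gamma(\sigma_A/\sigma_Q)+\epsilon}\right)\leq n^{-\rho}$, attributed to \cite{Mallein:2015aa} without a precise reference or proof. That estimate is exactly the crux of your argument (as you acknowledge), it is not what this paper imports from the companion paper (only the \envAS convergence of Theorem \ref{thm:BRWBasicExtended} is used), and it does not follow from it by any soft argument — as you note yourself, a Markov/annealing bound produces the wrong exponent when $\sigma_A>0$. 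So, as written, the proof is incomplete at its decisive step. Moreover, your claim that one \emph{must} rely on a finer multiscale analysis is not correct: the supremum over $k\leq t_n$ can be removed deterministically, environment by environment, by the Markov property at time $k$. Indeed, for every $k\leq t_n$,
\[
  \rmP_\calL\left(T_j\geq -x_n-\log t_n-j^\alpha,\ j\leq n\right)\geq \rmP_\calL\left(T_j\geq -\log t_n,\ j\leq k\right)\,\rmP^k_\calL\left(T_j\geq -x_n-j^\alpha,\ j\leq n\right),
\]
hence, uniformly in $k\leq t_n$,
\[
  \rmP^k_\calL\left(T_j\geq -x_n-j^\alpha,\ j\leq n\right)\leq \frac{\rmP_\calL\left(T_j\geq -x_n-\log t_n-j^\alpha,\ j\leq n\right)}{\rmP_\calL\left(T_j\geq -\log t_n,\ j\leq t_n\right)}.
\]
Two applications of the \envAS Theorem \ref{thm:BRWBasicExtended} now suffice: the numerator has exponent $-\gamma\left(\tfrac{\sigma_A}{\sigma_Q}\right)$ (the starting level $x_n+\log t_n$ still satisfies the hypotheses), and the logarithm of the denominator is of order $\log t_n=o(\log n)$. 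This gives the almost sure uniform upper bound with no concentration estimate, no dyadic subsequence and no Borel--Cantelli. If you insist on your route, you must actually prove the polynomial tail bound, and also fix a small bookkeeping point: for $n\in(n_i,n_{i+1}]$ the shifts range over $k\leq t_n$, which may exceed $t_{n_i}$ if $(t_n)$ is not monotone, so the union bound at scale $n_i$ should be taken over $k\leq\max_{m\leq n_{i+1}}t_m$.
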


\begin{proof}
The lower bound of this lemma is a direct consequence of Theorem~\ref{thm:BRWBasicExtended}, we only consider the upper bound in the rest of the proof. For any $k \leq t_n$, applying the Markov property at time $k$ we obtain
\begin{multline*}
  \rmP_\calL\left( T_j \geq - x_n - \log t_n - j^\alpha, j \leq n \right) \\
  \geq \rmP_\calL\left( T_j \geq -\log t_n, j \leq k\right) \times \rmP^k_\calL\left( T_j \geq - x_n - j^\alpha, j \leq n-k \right).
\end{multline*}
As a consequence for any  $k \leq t_n$ we have
\begin{align}
  \rmP^k_\calL \left( T_j \geq - x_n - j^\alpha, j \leq n-k \right) &\leq \rmP^k_\calL \left( T_j \geq - x_n - j^\alpha, j \leq n -t_n \right)\nonumber\\
  &\leq \frac{\rmP_\calL\left( T_j \geq - x_n - \log t_n -j^\alpha, j \leq n \right)}{\rmP_\calL\left( T_j \geq -\log t_n, j \leq t_n \right)} \label{eqn:estimate}. 
\end{align}
By Theorem \ref{thm:BRWBasicExtended} we get
\[
  \lim_{n \to  \infty} \frac{\log \rmP_\calL\left( T_j \geq - x_n -\log t_n - j^\alpha, j \leq n \right)}{\log n} = - \gamma\left(\frac{\sigma_A}{\sigma_Q}\right), \quad \envAS
\]
Similarly, as $\lim_{n} \frac{\log t_n}{\log n} = 0$ we have
\[
  \lim_{n \to  \infty} \frac{\log \rmP_\calL \left( T_j \geq -\log t_n, j \leq t_n \right)}{\log n} = 0, \quad  \envAS
\]
Applying these two estimates to \eqref{eqn:estimate} concludes the proof.
\end{proof}

We now derive from the Berry-Esseen theorem~\cite{Ber41,Ess42} a local limit theorem for random walks in random environment. There exists a universal constant $C_1$ such that, given $X_1,\ldots X_n$ independent centered random variables with finite third moment and $N$ is a standard Gaussian random variable
\begin{equation}
  \label{eqn:BerryEssen}
  \sup_{t \in \R} \left| \P\left( \frac{ \sum_{i=1}^nX_i}{\sqrt{\sum_{i=1}^n\Var(X_i)}} \geq t \right) - \P(N \geq t) \right| \leq C_1 \frac{\max_{i \leq n} \frac{\E(|X_i|^3)}{\Var(X_i)}}{\sqrt{\sum_{i=1}^n \Var(X_i)}}.
\end{equation}
We obtain the following bounds.
\begin{lemma}
Let $(x_n) \in \R_+^\N$ such that 
\[ \lim_{n \to  \infty} x_n =  \infty \text{ and } \frac{\log x_n}{\log n} = 0.\]
For any $\epsilon > 0$, we have
\begin{equation}
  \label{eqn:lltBoundFromAbove}
\lim_{n\to \infty}\P\left(\sup_{y\in\R}\frac{\log\rmP_{\calL}(T_{n}\in[y,y+x_n])}{\log n}\leq-\frac{1}{2}+\epsilon\right)=1.
\end{equation}
Moreover, for any $\epsilon > 0$ and $a > 0$,
\begin{equation}
  \label{eqn:lltBoundFromBelow}
  \lim_{n\to \infty}\P\left(\inf_{|y|< an^{1/2}}\frac{\log\rmP_{\calL}(T_{n}\in[y,y+x_n])}{\log n}\geq-\frac{1}{2}-\epsilon\right)=1.
\end{equation}
\end{lemma}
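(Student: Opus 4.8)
The plan is to reduce both bounds to the Berry–Esseen estimate \eqref{eqn:BerryEssen}, conditionally on the environment, after controlling the randomness coming from the environment via the variance sums and third-moment ratios. Write $V_n = \sum_{i=1}^n \rmVar_\calL(T_i-T_{i-1}) = \sum_{i=1}^n \kappa_i''(\theta^*)$. Under $\P$, the summands $\kappa_i''(\theta^*)$ are i.i.d.\ with mean $\sigma_Q^2/{\theta^*}^2 \in (0,\infty)$, so by the strong law $V_n/n \to \sigma_Q^2/{\theta^*}^2$ $\P$-a.s., and in particular $\P$-a.s.\ one has $cn \leq V_n \leq Cn$ for all $n$ large. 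Likewise the third absolute moments $\rmE_\calL(|X_i-\rmE_\calL(X_i)|^3)$ form an i.i.d.\ sequence; assuming they have finite $\P$-expectation (which follows from the integrability hypotheses \eqref{eqn:secondMoment}–\eqref{eqn:integrabilityQuenched} after a routine check), the maximum over $i \leq n$ is $o(n^{1/2})$ $\P$-a.s., so the right-hand side of \eqref{eqn:BerryEssen} applied to the increments of $T$ tends to $0$ $\P$-a.s.; on this event the conditional law of $(T_n - \rmE_\calL(T_n))/\sqrt{V_n}$ is uniformly close to $\calN(0,1)$.

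For the upper bound \eqref{eqn:lltBoundFromAbove}: by Berry–Esseen, for any interval $[y,y+x_n]$,
\[
  \rmP_\calL(T_n \in [y,y+x_n]) \leq \P\!\left(N \in \left[\tfrac{y'}{\sqrt{V_n}}, \tfrac{y'+x_n}{\sqrt{V_n}}\right]\right) + 2\,\varepsilon_n \leq \frac{x_n}{\sqrt{2\pi V_n}} + 2\,\varepsilon_n,
\]
where $y' = y - \rmE_\calL(T_n)$ and $\varepsilon_n$ is the Berry–Esseen error, uniformly in $y$. Since $\varepsilon_n \to 0$ and $x_n/\sqrt{V_n} \leq C x_n / n^{1/2} = n^{-1/2 + o(1)}$ (using $\log x_n/\log n \to 0$), the whole bound is $n^{-1/2+o(1)}$, which gives $\sup_y \log \rmP_\calL(T_n \in [y,y+x_n])/\log n \leq -1/2 + \epsilon$ eventually; this even holds $\P$-a.s.\ on the good event, a fortiori in $\P$-probability.

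For the lower bound \eqref{eqn:lltBoundFromBelow}: fix $y$ with $|y| < a n^{1/2}$ and split $[y, y+x_n]$ into $\lceil x_n \rceil$ subintervals of length at most $1$; it suffices to lower-bound $\rmP_\calL(T_n \in [z, z+1])$ for some such $z$, hence it suffices to lower-bound the maximal such probability, and in fact a single interval of unit length suffices since we only need $\log(\cdot)/\log n \geq -1/2 - \epsilon$. By Berry–Esseen,
\[
  \rmP_\calL(T_n \in [z,z+1]) \geq \P\!\left(N \in \left[\tfrac{z'}{\sqrt{V_n}}, \tfrac{z'+1}{\sqrt{V_n}}\right]\right) - 2\,\varepsilon_n,
\]
and for $|z| \leq a n^{1/2} + x_n + |\rmE_\calL(T_n)|$ the Gaussian probability is at least $\tfrac{1}{\sqrt{2\pi V_n}} e^{-(z')^2/(2V_n)} \cdot c \geq c' n^{-1/2}$, because $|z'|/\sqrt{V_n}$ stays bounded (here one needs $\rmE_\calL(T_n) = O(n^{1/2})$ $\P$-a.s., which holds since $\E(T_n) = 0$ and $\rmVar(\rmE_\calL(T_n)) = n\sigma_A^2/{\theta^*}^2$, so $\rmE_\calL(T_n) = O_\P(n^{1/2})$ — this is the point where the convergence is only in $\P$-probability and cannot be upgraded to a.s.). Since $\varepsilon_n = o(n^{-1/2})$ $\P$-a.s., the difference is still $\geq c'' n^{-1/2} = n^{-1/2 + o(1)}$, uniformly over the relevant $y$, giving the claim.

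The main obstacle is the uniformity in $y$ combined with the need for only-in-probability convergence in \eqref{eqn:lltBoundFromBelow}: one must track that the recentering $\rmE_\calL(T_n)$ is $O_\P(n^{1/2})$ but not $o(n^{1/2})$, so the Gaussian density at the relevant point is bounded below only with high probability, not almost surely; and one must make sure the $x_n$ subintervals do not cost more than a $\log n / \log n \to 0$ correction, which is fine since $\log x_n / \log n \to 0$. A secondary technical point is verifying that the quenched third moments $\rmE_\calL(|X_i - \rmE_\calL X_i|^3)$ are $\P$-integrable from \eqref{eqn:secondMoment}; this is a routine computation bounding $|X_i|^3 e^{\theta^* \ell}$-type terms by the square appearing in \eqref{eqn:secondMoment} together with the exponential tilt in \eqref{eqn:integrabilityQuenched}, and I would relegate it to a line or two.
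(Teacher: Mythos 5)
Your overall strategy is the same as the paper's (quenched Berry--Esseen around the random centring $\rmE_\calL(T_n)$, law of large numbers for $\Sigma_n=\rmVar_\calL(T_n)$, and the observation that $\rmE_\calL(T_n)/\sqrt{\Sigma_n}$ is only $O_\P(1)$, not $O(1)$ a.s., which is why \eqref{eqn:lltBoundFromBelow} holds only in probability). The upper bound \eqref{eqn:lltBoundFromAbove} is fine. But the lower bound has a genuine gap: by shrinking the target window to a unit-length interval you lose exactly the margin that makes the argument work. The Gaussian term for a unit interval is of order $\Sigma_n^{-1/2}\asymp n^{-1/2}$, and the Berry--Esseen error is also of order $\Sigma_n^{-1/2}$; your claim that $\varepsilon_n=o(n^{-1/2})$ a.s.\ is unsubstantiated and cannot follow from \eqref{eqn:BerryEssen}, which never gives better than $C/\sqrt{\Sigma_n}$. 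So the difference ``Gaussian probability minus $2\varepsilon_n$'' is not bounded below by $c''n^{-1/2}$; it could be negative. The paper avoids this by keeping the full window of length $x_n$: the main term is then $c\,x_n\Sigma_n^{-1/2}\exp\bigl(-\bigl((an^{1/2}+|\rmE_\calL(T_n)|)/\sqrt{\Sigma_n}\bigr)^2\bigr)$, which dominates the $O(\Sigma_n^{-1/2})$ error precisely because $x_n\to+\infty$ (the exponential factor being bounded below with high $\P$-probability via the convergence in law of $|\rmE_\calL(T_n)|/\sqrt{\Sigma_n}$ to $|\calN(0,\sigma_A^2/\sigma_Q^2)|$). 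Your proof can be repaired by simply not subdividing and arguing on $[y,y+x_n]$ directly.

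A secondary issue is your control of the Berry--Esseen ratio: for i.i.d.\ nonnegative variables, a finite $\P$-mean only gives $\max_{i\le n}(\cdot)=o(n)$ a.s., not $o(n^{1/2})$; the latter would require a second moment. This is repairable, and in fact unnecessary: the quenched exponential-moment condition \eqref{eqn:integrabilityQuenched} gives a deterministic a.s.\ bound on $\rmE_\calL\bigl(|X_i-\rmE_\calL(X_i)|^3\bigr)$ uniformly in $i$ (this is how the paper obtains the error $C_2/\sqrt{\Sigma_n}$), though one should also note that a uniform lower bound on $\rmVar_\calL(X_i)$, or a direct version of Berry--Esseen with $\max_i\rmE_\calL|X_i-\rmE_\calL X_i|^3$ in the numerator, is needed to control the ratio appearing in \eqref{eqn:BerryEssen}. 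With these two corrections your argument matches the paper's proof.
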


\begin{proof}
For any $n \in \N$, we set $\Sigma_n = \rmVar_\calL(T_n)$. By \eqref{eqn:BerryEssen} and \eqref{eqn:integrabilityQuenched}, we have
\begin{equation}
  \label{eqn:berwre}
  \sup_{t \in \R} \left| \rmP_\calL\left( T_n \geq t  \right) - \rmP_\calL\left(N \geq \frac{t-\rmE_\calL(T_n)}{\sqrt{\Sigma_n}}\right)\right| \leq \frac{C_2}{\sqrt{\Sigma_n}},\quad \envAS
\end{equation}
where $N$ is a Gaussian random variable independent of $\calL$ and $C_2>0$. 

Using \eqref{eqn:berwre}, for any $y \in \R$ we have
\begin{align*}
  \rmP_\calL\left( T_n \in [y,y+x_n] \right) &= \rmP_\calL\left( \frac{T_n}{\sqrt{\Sigma_n}} \geq \frac{y}{\sqrt{\Sigma_n}}\right) - \rmP_\calL\left( \frac{T_n}{\sqrt{\Sigma_n}} > \frac{y+x_n}{\sqrt{\Sigma_n}} \right)\\
  &\leq \rmP_\calL\left( N + \frac{\rmE_\calL(T_n)}{\sqrt{\Sigma_n}} \in \left[\tfrac{y}{\sqrt{\Sigma_n}},  \tfrac{y +x_n}{\sqrt{\Sigma_n}}\right] \right) + \frac{2C_2}{\sqrt{\Sigma_n}}\\
  &\leq C \frac{x_n}{\sqrt{\Sigma_n}}.
\end{align*}
Observing that by the law of large numbers we have $\lim_{n \to  \infty} \frac{\Sigma_n}{n} = \sigma^2_Q$ \envAS we conclude that
\[
  \limsup_{n \to  \infty} \sup_{y \in \R} \frac{\log \rmP_\calL\left( T_n \in [y,y+x_n] \right)}{\log n} \leq -\frac{1}{2},\quad \envAS
\]
which yields \eqref{eqn:lltBoundFromAbove}.

Similarly, for any $y \in \R$ with $|y| \leq an^{1/2}$, \eqref{eqn:berwre} yields
\begin{align*}
  \rmP_\calL\left( T_n \in [y,y+x_n]\right)
  &\geq \rmP_\calL\left( N + \frac{\rmE_\calL(T_n)}{\sqrt{\Sigma_n}} \in \left[\tfrac{y}{\sqrt{\Sigma_n}},  \tfrac{y + x_n}{\sqrt{\Sigma_n}}\right] \right)\\
  &\geq c \frac{x_n}{\sqrt{\Sigma_n}} \exp\left( -\left( \frac{ an^{1/2} + \left|\rmE_\calL(T_n)\right|}{\sqrt{\Sigma_n}}  \right)^2 \right) - \frac{2C_2}{\mu \sqrt{\Sigma_n}}.
\end{align*}
Under $\P$ the random variable $\frac{\left|\rmE_\calL(T_n)\right|}{\sqrt{\Sigma_n}}$ converges in law toward $|\calN(0,\tfrac{\sigma_A^2}{\sigma_Q^2})|$, thus for any $\epsilon>0$ we have
\[
  \lim_{t \to  \infty} \P\left( \inf_{|y| < an^{1/2}} \frac{\log \rmP_\calL(T_n \in [y,y+x_n])}{\log n} \geq -\frac{1}{2}-\epsilon \right) = 1.
\]
\end{proof}

Equation \eqref{eqn:lltBoundFromBelow} can be made more precise, to compute the probability for a random walk in random environment to end up in a given interval, while staying below a wall at level $O(n^{1/2})$.
\begin{lemma}
\label{lem:lltBelowWithWall}
Let $(x_n) \in \R_+^\N$ such that $\lim_{n \to  \infty} x_n =  \infty$. For any $a>0$, $\epsilon>0$ and $\eta > 0$, there exists $b>0$ such that
\[
  \liminf_{n \to  \infty} \P\left( \inf_{x \in [-a n^{1/2},a n^{1/2}]}\frac{\log \rmP_\calL(T_n \in [x,x+x_n], T_j \leq b n^{1/2}, j \leq n)}{\log n} > -\frac{1}{2}-\epsilon \right) \geq 1-\eta.
\]
\end{lemma}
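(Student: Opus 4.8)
The plan is to compare the probability in question with the unconstrained local limit estimate \eqref{eqn:lltBoundFromBelow} and to show that imposing the barrier $T_j\le b\sqrt n$ removes only a small proportion of the mass once $b$ is large. First I would reduce to the case $\lim_n \log x_n/\log n=0$: enlarging the window can only increase the probability, so it suffices to prove the bound with $x_n$ replaced by $\min(x_n,\log n)$ (the factor $\log x_n$ is then negligible against $\log n$ in the final logarithm). With this reduction, the starting point is, for every $b>0$, $n$ and $x$ (ignoring integer parts of $n/2$ throughout),
\[
  \rmP_\calL(T_n\in[x,x+x_n],\, T_j\le b\sqrt n,\, j\le n)\ \ge\ \rmP_\calL(T_n\in[x,x+x_n]) - A^{(1)}_n(x) - A^{(2)}_n(x),
\]
where $A^{(1)}_n(x)$ (resp.\ $A^{(2)}_n(x)$) is the probability of ending in the window while exceeding $b\sqrt n$ at some time $\le n/2$ (resp.\ $>n/2$). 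For the main term I would invoke not the statement but the bound established in the proof of \eqref{eqn:lltBoundFromBelow}: uniformly in $|y|\le a\sqrt n$ it gives $\rmP_\calL(T_n\in[y,y+x_n])\ge c\,x_n\Sigma_n^{-1/2}e^{-R_n}-2C_2(\mu\sqrt{\Sigma_n})^{-1}$ with $R_n=\bigl((a\sqrt n+|\rmE_\calL(T_n)|)/\sqrt{\Sigma_n}\bigr)^2$; since $R_n$ is tight, $\Sigma_n/n\to\sigma_Q^2$ and $x_n\to+\infty$, this is $\ge c'x_n/\sqrt n$ for all such $y$ on an environment event of $\P$-probability $\ge 1-\eta/4$, for some $c'=c'(a,\eta)>0$.

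For $A^{(1)}_n(x)$ I would apply the Markov property at time $n/2$ and bound the continuation by $\sup_{y}\rmP^{n/2}_\calL(T_{n/2}\in[y,y+x_n])$, which by the argument of \eqref{eqn:lltBoundFromAbove} applied to the shifted walk (it still has $\asymp n$ steps) is $\le C x_n/\sqrt n$ with probability $\ge 1-\eta/8$. The remaining factor $\rmP_\calL(\max_{j\le n/2}T_j>b\sqrt n)$ I would control by centring: write $T_j=(T_j-\rmE_\calL T_j)+\rmE_\calL T_j$, note that $(\rmE_\calL T_j)_j$ is a mean-zero random walk with i.i.d.\ variance-$\sigma_A^2$ increments so that $\max_{j\le n/2}|\rmE_\calL T_j|\le c_1\sqrt n$ with probability $\ge 1-\eta/8$, and then Kolmogorov's maximal inequality for the $\rmP_\calL$-martingale $T_j-\rmE_\calL T_j$ gives $\rmP_\calL(\max_{j\le n/2}T_j>b\sqrt n)\le \rmVar_\calL(T_{n/2})/((b-c_1)^2n)\le \sigma_Q^2/(b-c_1)^2=:\delta_1(b)$ with probability $\ge 1-\eta/8$ (using $\rmVar_\calL(T_n)/n\to\sigma_Q^2$). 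Hence $A^{(1)}_n(x)\le C\delta_1(b)\,x_n/\sqrt n$, uniformly in $x$, with $\delta_1(b)\to 0$ as $b\to+\infty$.

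The term $A^{(2)}_n(x)$, where the barrier is crossed close to time $n$, is the main obstacle, because there is no local limit control over the short remaining interval; here I would use time reversal. Setting $\hat T_i=T_n-T_{n-i}$, which conditionally on $\calL$ is again a random walk in random environment (with environment $(\calL_{n-i+1})_i$), a crossing $T_{j^*}>b\sqrt n$ with $j^*>n/2$ together with $T_n\le x+x_n$, $|x|\le a\sqrt n$, forces $\min_{i< n/2}\hat T_i<(a-b)\sqrt n+x_n\le -\tfrac{b-a}{2}\sqrt n$ for $n$ large; since $\hat T_n=T_n$, this yields $A^{(2)}_n(x)\le\rmP_\calL\bigl(\hat T_n\in[x,x+x_n],\ \min_{i< n/2}\hat T_i<-\tfrac{b-a}{2}\sqrt n\bigr)$. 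Applying the Markov property to $\hat T$ at time $n/2$ exactly as in the previous step bounds this by $\rmP_\calL(\min_{i< n/2}\hat T_i<-\tfrac{b-a}{2}\sqrt n)$ times a local-limit supremum $\le C x_n/\sqrt n$ (the reversed, shifted walk again has $\asymp n$ steps), and the first factor is $\le \sigma_Q^2/(\tfrac{b-a}{2}-c_3)^2=:\delta_2(b)$ with probability $\ge 1-\eta/4$ by the same centring argument applied to $\hat T$. Thus $A^{(2)}_n(x)\le C\delta_2(b)\,x_n/\sqrt n$, with $\delta_2(b)\to 0$ as $b\to+\infty$.

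Finally, intersecting all the environment events above, whose total complement has $\P$-probability $\le\eta$ for $n$ large, gives uniformly over $|x|\le a\sqrt n$
\[
  \rmP_\calL(T_n\in[x,x+x_n],\, T_j\le b\sqrt n,\, j\le n)\ \ge\ \bigl(c'-C\delta_1(b)-C\delta_2(b)\bigr)\frac{x_n}{\sqrt n}.
\]
Choosing $b=b(a,\epsilon,\eta)$ large enough that $C(\delta_1(b)+\delta_2(b))\le c'/2$, the right-hand side is at least $\tfrac{c'}{2}x_n/\sqrt n$; taking logarithms, dividing by $\log n$ and using $\log x_n=o(\log n)$ then yields a quantity tending to $-\tfrac12$, hence $>-\tfrac12-\epsilon$ for all $n$ large, which is the claim. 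The only genuinely delicate point is $A^{(2)}_n$: the reversal is what converts the narrow ``end in a window'' constraint into a local limit estimate over a macroscopic interval while separating off the (small, if $b$ is large) cost of the high excursion.
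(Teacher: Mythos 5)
Your proposal is correct and follows essentially the same route as the paper: lower-bound the unconstrained window probability as in the proof of \eqref{eqn:lltBoundFromBelow}, subtract the probability of crossing the level $bn^{1/2}$ while still ending in the window, split that crossing term according to whether the crossing happens before or after time $n/2$ (handling the late crossing by viewing the walk from its endpoint), and take $b$ large so the correction is a small fraction of the main term of order $x_n/\sqrt{n}$. The only real difference is the tool used on the crossing term — you use centring, Kolmogorov's maximal inequality and a local-limit supremum (a factor $O(b^{-2})$), whereas the paper uses the Berry--Esseen bound to place the walk in a window at distance of order $bn^{1/2}$ from its conditional mean (a factor $e^{-b^2/4\sigma_Q^2}$) — and either form of smallness in $b$ suffices.
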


\begin{proof}
To compute this probability, we observe that for any $|x| \leq an^{1/2}$,
\begin{multline*}
  \rmP_\calL(T_n \in [x,x+x_n], T_j \leq b n^{1/2}, j \leq n)\\ = \rmP_\calL\left( T_n \in [x,x+x_n] \right) -  \rmP_\calL\left( T_n \in [x,x+x_n], \tau_n \leq n \right),
\end{multline*}
where $\tau_n = \inf\{ k \in \N : T_k \geq b n^{1/2} \}$. Moreover, recall that
\[
  \rmP_\calL\left( T_n \in [x,x+x_n] \right) \geq c \frac{x_n}{\sqrt{\Sigma_n}} \exp\left( -\left( \frac{ an^{1/2} + \left|\rmE_\calL(T_n)\right|}{\sqrt{\Sigma_n}}  \right)^2 \right).
\]
We now bound $\rmP_\calL\left( T_n \in [x,x+x_n], \tau_n \leq n \right)$ from above.

Let $\eta > 0$ we set $b>2a$ such that for any $n \in \N$
\[
  \rmP_\calL\left( \max_{j \leq n} \left| \rmE_\calL(T_j) \right| \leq bn^{1/2}/2 \right) > 1-\eta.
\]
Thus, on the event $\left\{\max_{j \leq n} \left| \rmE_\calL(T_j) \right| \leq bn^{1/2}/2\right\}$, we have
\begin{multline*}
  \rmP_\calL\left( T_n \in [x,x+x_n], \tau_n \leq n \right)
  \leq \max_{k \geq n/2} \sup_{y \geq bn^{1/2}/2} \rmP_\calL\left( T_k-\rmE_\calL(T_k) \in [y,y+x_n]\right)\\
   + \max_{k \leq n/2} \sup_{y \geq bn^{1/2}/2} \rmP^k_\calL\left( T_{n-k}-\rmE^{n-k}_\calL(T_{n-k}) \in [y,y+x_n]\right).
\end{multline*}
By the Berry-Esseen inequality, for $n$ large enough, we obtain
\[
  \rmP_\calL\left( T_n \in [x,x+x_n], \tau_n \leq n \right) \leq C \frac{x_n}{\sqrt{\Sigma_n}} e^{-b^2/4\sigma_Q^2},\quad \envAS
\]
we conclude that
\[
  \liminf_{n \to  \infty} \P\left( \inf_{x \in [-a n^{1/2},a n^{1/2}]}\frac{\log \rmP_\calL(T_n \in [x,x+x_n], T_j \leq b n^{1/2}, j \leq n)}{\log n} > -\frac{1}{2}-\epsilon \right) \geq 1-\eta.
\]
\end{proof}

Using the above results, we prove Theorem \ref{thm:randomExcursion}, starting from the upper bound.
\begin{proof}[Proof of \eqref{eqn:randomUpperExcursion}]
We decompose the path of $T$ into three pieces. We set $p = \floor{n/3}$, applying the Markov property at time $p$, we have
\begin{multline}
  \rmP^k_\calL\left( T_{n-k-k'} \geq x-y, T_j \leq x, j \leq n-k-k' \right)\\ \leq \rmP^k_\calL \left( T_j \leq  x_n, j \leq p-k \right) \sup_{z \in \R} \rmP^p_\calL\left( T_{n-k'-p} + z \geq x-y, T_j+z \leq x, j \leq n-p-k' \right).\label{eqn:decomp}
\end{multline}
We introduce $\hat{T}^{n-p-k'}_j = T_{n-p-k'}-T_{n-p-k'-j}$ the time reversed random walk. Using the fact that $T_{n-p-k'}+z \in [x-y,x]$ and $y\leq x_n$, we have
\begin{multline*}
  \rmP^p_\calL\left(T_{n-p-k'} + z \geq x-y, T_j+z \leq x, j \leq n-p-k'\right) \\
  \leq \rmP^p_\calL \left(\hat{T}_{n-p-k'}^{n-p-k'}\in [x-y-z, x-z],  \hat{T}_j^{n-p-k'}\geq -x_n,j\leq n-p-k'\right).
\end{multline*}
Applying the Markov property to $\hat{T}$ at time $p-k'$ yields
\begin{multline*}
  \rmP^p_\calL \left( T_{n-p-k'} + z \geq x-y, T_j+z \leq x, j \leq n-p-k' \right)\\
  \leq \rmP^p_\calL \left( \hat{T}_j^{n-p-k'} \geq -x_n, j \leq p-k' \right) \sup_{h \in \R} \rmP^p_\calL \left( \hat{T}_{n-p-k'}^{n-p-k'} - \hat{T}_{p-k'}^{n-p-k'} \in [h,h+x_n] \right).
\end{multline*}
A direct calculation shows that $\hat{T}_{n-p-k'}^{n-p-k'} - \hat{T}_{p-k'}^{n-p-k'} = T_{n-2p}$, therefore \eqref{eqn:decomp} yields
\begin{multline}
  \label{eqn:decompositionUpperFinal}
  \rmP^k_\calL \left( T_{n - k - k'} \geq x-y, T_j \leq y, j \leq n -k - k' \right) \leq \rmP^k_\calL \left( T_j \leq x_n, j \leq p-k  \right)\\
  \times \rmP^p_\calL\left( \hat{T}^{n-p-k'}_j \geq -x_n, j \leq p-k' \right)  \sup_{h \in \R} \rmP^p_\calL\left( T_{n-2p} \in [h,h+x_n] \right).
\end{multline}
We conclude that
\begin{multline*}
  \sup_{0 \leq x , y \leq x_n} \sup_{k,k' \leq x_n} \rmP^k_\calL \left( T_{n - k - k'} \leq x, T_j \geq y, j \leq n -k - k' \right)\\
  \leq \sup_{k \leq x_n} \rmP^k \left( T_j \geq - x_n, j \leq p-x_n \right) \sup_{k' \leq x_n} \rmP^p\left( \hat{T}^{n-p-k'}_j \leq x_n, j \leq p-x_n \right)\\
  \times \sup_{h \in \R} \rmP^p_\calL\left( T_{n-2p} \in [h,h+x_n] \right).
\end{multline*}

Applying \eqref{eq:extendedRWoverRWclaimReversed} we have
\[
  \lim_{n \to  \infty} \sup_{k \leq x_n} \frac{\log \rmP^k_\calL(T_j \geq -x_n, j \leq p-x_n)}{\log n} = -\gamma\left(\tfrac{\sigma_A}{\sigma_Q}\right),\quad \envAS
\]
Moreover, under the measure $\P \otimes \rmP_\calL$, the process $\hat{T}^{n-k}$ has the same law as $T$, thus using Lemma \ref{lem:uniformRwOverRw} for the second term, we have
\begin{equation}
  \label{eq:backward}  \lim_{n \to  \infty} \sup_{k' \leq x_n} \frac{\log  \rmP^p_\calL\left( \hat{T}^{n-p-k'}_j \leq x_n, j \leq p-x_n \right)}{\log n} = -\gamma\left(\tfrac{\sigma_A}{\sigma_Q}\right) \text{ in } \P\text{-probability}.
\end{equation}
Finally, by \eqref{eqn:lltBoundFromAbove}, for any $\epsilon>0$ we have
\[  \lim_{n \to  \infty} \P \left( \sup_{h \in \R}  \frac{\log \rmP^p_\calL \left(T_{n-2p} \in [h,h+x_n] \right)}{\log n} \leq - \frac{1}{2} +\epsilon\right) =1. \]
Combining these three estimates concludes the proof.
\end{proof}

We then prove the lower bound of Theorem \ref{thm:randomExcursion}.
\begin{proof}[Proof of \eqref{eqn:randomLowerExcursion}]
We implement a decomposition similar to \eqref{eqn:decompositionUpperFinal}. However in this case we aim to obtain a bound from below. We set $p = \floor{n/3}$ and fix $0<a<b$. Applying the Markov property at time $p$ we get 
\begin{multline}
  \label{eqn:decomp2}
  \rmP_\calL\left( T_n \geq x-y, T_j \leq x-r_{n,j}, j \leq n \right)\\
  \geq \rmP_\calL\left( T_p \in \left[ -b n^{1/2}, -a n^{1/2}\right], T_j \leq a_n-j^\alpha, j \leq p \right)\qquad \qquad \qquad \qquad\qquad \\
  \times \inf_{z\in [a n^{1/2}, b n^{1/2}]} \rmP_\calL^p \left( T_{n-p} - z \geq x-y, T_j - z \leq x+r_{n,j}, j \leq n-p \right).
\end{multline}
As in the proof of \eqref{eqn:randomUpperExcursion} we use notation $\hat{T}^{n-p}$, defined by $\hat{T}^{n-p}_j = T_{n-p}-T_{n-p-j}$ for the time-reversed random walk. Observe that the condition $T_{n-p}-z \in [x-y,x]$ is weaker than $T_{n-p}-z \in [x-a_n,x]$, and
\begin{multline*}
  \rmP_\calL^p\left( T_{n-p} - z \in[x-a_n, x], T_j-z \leq x+r_{n,j}, j \leq n-p  \right) \\
  \geq \rmP_\calL^p\left( \hat{T}_{n-p}^{n-p} \in[x-a_n + z, x+z], \hat{T}_j^{n-p}\geq -a_n + j^\alpha, j \leq n-p  \right).
\end{multline*}
Applying the Markov property to $\hat{T}^{n-p}$ at time $p$, we have
\begin{multline*}
   \rmP_\calL^p\left( T_{n-p} - z \in[x-a_n, x], T_j-z \leq x-r_{n,j}, j \leq n-p  \right) \\
   \geq \rmP_\calL^p \left( \hat{T}_{p}^{n-p}\in[a n^{1/2}, b n^{1/2}], \hat{T}_j^{n-p} \geq j^\alpha-a_n, j\leq p \right)\qquad \qquad \\
   \times \inf_{z' \in [a n^{1/2},b n^{1/2}]} \rmP_\calL^p\left(\begin{array}{l}\hat{T}^{n-p}_{n-2p} - \hat{T}^{n-p}_p + z'\in[x+z-a_n, x+z]\\ \hat{T}^{n-p}_{j}-\hat{T}^{n-p}_{p} + z'\geq r_{n,p+j}-a_n, j\leq n-2p\end{array} \right).
\end{multline*}
Thus \eqref{eqn:decomp2} yields
\begin{align*}
  &\rmP_\calL \left( T_n \geq x-y, T_j \leq x-r_{n,j}, j \leq n \right)\\
  &\qquad\qquad\geq \rmP_\calL\left( T_p \in \left[ -b n^{1/2}, -a n^{1/2}\right], T_j \leq a_n-j^\alpha, j \leq p \right)\\
  &\qquad\qquad\qquad \times \rmP^p_\calL \left( \hat{T}^n_p \in \left[ a n^{1/2}, b n^{1/2} \right], \hat{T}^n_j \geq -a_n + j^\alpha, j \leq p \right)\times p_n, 
\end{align*}
where 
\[	p_n = \inf_{|z| \leq (b-a)n^{1/2}} \rmP_\calL^p\left(T_{n-2p} \in[x-a_n-z', x-z], T_j \geq -an^{1/2}/2, j\leq n-2p \right).
\]

Using \eqref{eq:extendedRWoverRWclaimReversed} we have
\[
  \lim_{n \to  \infty} \frac{\log \rmP_\calL\left( T_p \in \left[ -b n^{1/2}, -a n^{1/2}\right], T_j \leq a_n - j^\alpha, j \leq p \right)}{\log n}  = -\gamma\left(\tfrac{\sigma_A}{\sigma_Q}\right), \quad \envAS
\]
Further using Theorem \ref{thm:BRWBasicExtended} we handle the second term
\begin{multline*}
	\lim_{n \to  \infty} \frac{\log \rmP^p_\calL \left( \hat{T}_p^n \in \left[ a n^{1/2}, b n^{1/2} \right], \hat{T}_j^n \geq -a_n + j^\alpha, j \leq p \right)}{\log n}  = -\gamma\left(\tfrac{\sigma_A}{\sigma_Q}\right)\\ \quad \text{in } \P\text{-probability.}
\end{multline*}
Note that due to time-reversal, Theorem \ref{thm:BRWBasicExtended} only provides convergence in probability, and Lemma \ref{lem:backward} hints that this convergence cannot be strengthened into almost sure convergence.

Finally, using Lemma \ref{lem:lltBelowWithWall} for any $\epsilon,\eta>0$, choosing $a>0$ large enough, we have
\[
  \lim_{n \to  \infty} \P\left(\frac{\log p_n}{\log n}\geq - \frac{1}{2} -\epsilon \right) \geq 1-\eta.
\]
Combining the last three estimates and letting $\eta \to 0$ concludes the proof.
\end{proof}

\section{Maximal displacement for the branching random walk in random environment}
\label{sec:corrections}

We analyze the asymptotic behavior of $\rmP_\calL(M_n \geq y)$ as $n,y \to  \infty$. Roughly speaking this is done by controlling the number of individuals that are at time $n$ greater than $K_n-\phi \log n$ and stayed below the environment-dependent border $k \mapsto K_k + y$. It is fairly easy to obtain a bound from above by calculating the mean number of particles crossing at some time this boundary. As can be seen in Figure~\ref{fig:img}, with high probability no particle will cross the line. The bound from below requires bounding the second moment and a more subtle analysis.

\begin{figure}[ht]
\centering
\input{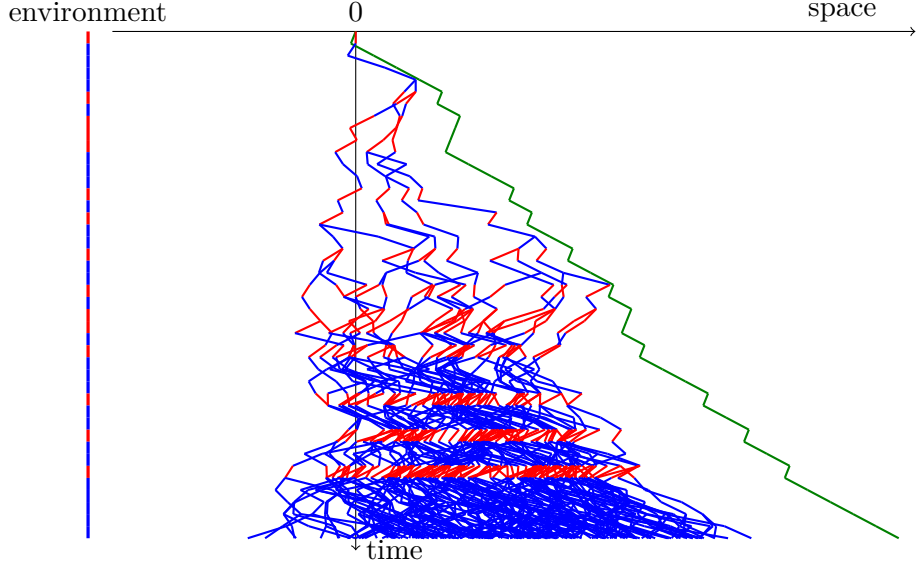}
\caption{A branching random walk in random environment and its border}
\label{fig:img}
\end{figure}

\subsection{Proof of the upper bound of Theorem \ref{thm:logCorrection}}

\begin{lemma}
\label{lem:upperbound}
We assume that \eqref{eqn:defTheta}, \eqref{eqn:defVariances} and \eqref{eqn:integrabilityQuenched} hold. For any $\beta < \phi$, we have
\[
  \rmP_\calL\left( M_n \geq \tfrac{1}{\theta^*}K_n - \beta \log n \right) \to 0 \quad \text{in }\P-\text{probability.}
\]
\end{lemma}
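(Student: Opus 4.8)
The plan is a truncated first‑moment (one‑particle) argument: the heavy analytic work is already packaged in Theorem~\ref{thm:randomExcursion}, so what remains is to combine the many‑to‑one lemma with the ballot estimate \eqref{eqn:randomUpperExcursion}, being careful about the choice of the truncation level and the bookkeeping of exponents. Fix $\beta<\phi$ and pick $\epsilon\in\bigl(0,\theta^*(\phi-\beta)\bigr)$.

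\emph{Step 1 (barrier).} Choose a deterministic sequence $y_n\to+\infty$ with $\log y_n/\log n\to 0$ (for example $y_n=(\log n)^2$). By Lemma~\ref{lem:frontier},
\[
  \rmP_\calL\bigl(\exists u\in\T:\ V(u)>\tfrac1{\theta^*}K_{|u|}+y_n\bigr)\leq e^{-\theta^* y_n}\convn 0,\quad\envAS
\]
so, up to this deterministically vanishing error, I may impose on the event $\{M_n\geq\frac1{\theta^*}K_n-\beta\log n\}$ that every ancestor of the extremal particle stays below the wall $j\mapsto\frac1{\theta^*}K_j+y_n$. Bounding the probability of the resulting ``there exists such a particle'' event by its first moment and applying the modified many‑to‑one identity \eqref{eqn:manymodified} with $f(t_1,\dots,t_n)=\ind{t_n\geq-\theta^*\beta\log n}\prod_{j\leq n}\ind{t_j\leq\theta^* y_n}$ gives
\[
  \rmP_\calL\bigl(M_n\geq\tfrac1{\theta^*}K_n-\beta\log n\bigr)\ \leq\ e^{-\theta^* y_n}+\rmE_\calL\Bigl(e^{-T_n}\ind{T_n\geq-\theta^*\beta\log n}\ind{T_j\leq\theta^* y_n,\ j\leq n}\Bigr).
\]

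\emph{Step 2 (change of measure and ballot estimate).} On the event inside the indicator one has $e^{-T_n}\leq e^{\theta^*\beta\log n}=n^{\theta^*\beta}$, so the last term is at most $n^{\theta^*\beta}\,\rmP_\calL\bigl(T_n\geq-\theta^*\beta\log n,\ T_j\leq\theta^* y_n,\ j\leq n\bigr)$. I then apply \eqref{eqn:randomUpperExcursion} with $k=k'=0$, with the endpoint constraint $x-y=-\theta^*\beta\log n$ realised by $x=\theta^* y_n$ and $y=\theta^*(y_n+\beta\log n)$, and with reference sequence $x_n:=\theta^*(y_n+\beta\log n)$ (which satisfies $x_n\to+\infty$ and $\log x_n/\log n\to 0$ precisely because $y_n$ is sub‑polynomial): with $\P$‑probability tending to $1$,
\[
  \rmP_\calL\bigl(T_n\geq-\theta^*\beta\log n,\ T_j\leq\theta^* y_n,\ j\leq n\bigr)\ \leq\ n^{-\lambda+\epsilon},\qquad \lambda=2\gamma\!\left(\tfrac{\sigma_A}{\sigma_Q}\right)+\tfrac12=\theta^*\phi .
\]

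\emph{Step 3 (conclusion).} Combining the two steps, with $\P$‑probability tending to $1$,
\[
  \rmP_\calL\bigl(M_n\geq\tfrac1{\theta^*}K_n-\beta\log n\bigr)\ \leq\ e^{-\theta^* y_n}+n^{\theta^*\beta-\lambda+\epsilon}\ =\ e^{-\theta^* y_n}+n^{\theta^*(\beta-\phi)+\epsilon}\ \convn\ 0,
\]
since $\epsilon<\theta^*(\phi-\beta)$. As the bound is deterministic and tends to $0$, the left‑hand side tends to $0$ in $\P$‑probability, which is the claim.

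The step that needs genuine care is not analytic — that part lives in Theorem~\ref{thm:randomExcursion} — but rather the calibration of the truncation height $y_n$: it must diverge so that the escape probability $e^{-\theta^* y_n}$ vanishes, yet stay sub‑polynomial so that the hypothesis $\log x_n/\log n\to 0$ of \eqref{eqn:randomUpperExcursion} is met; and one must check that the change‑of‑measure factor $n^{\theta^*\beta}$ and the ballot exponent $n^{-\lambda}$ combine to exactly $n^{\theta^*(\beta-\phi)}$, which is where the strict inequality $\beta<\phi$ (equivalently $\theta^*\beta<\lambda$) is used. Since \eqref{eqn:randomUpperExcursion} holds only in probability, the conclusion is in $\P$‑probability and cannot be upgraded to an almost sure statement.
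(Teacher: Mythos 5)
Your proposal is correct and follows essentially the same route as the paper: restrict to paths staying below the barrier $j\mapsto\tfrac{1}{\theta^*}K_j+y_n$ via Lemma~\ref{lem:frontier}, bound the surviving event by its first moment through the many-to-one formula \eqref{eqn:manymodified}, and invoke the ballot estimate \eqref{eqn:randomUpperExcursion} together with $\lambda=\theta^*\phi$ to get a bound of order $n^{\theta^*(\beta-\phi)+\epsilon}$ on an environment event of $\P$-probability tending to one. The only cosmetic difference is your barrier height $(\log n)^2$ versus the paper's $\log n/\theta^*$, which changes nothing in the argument.
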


\begin{proof}
For any $n \in \N$ and $\beta > 0$, we set
\begin{equation}\label{eq:ybeta}
	Y_n(\beta) = \sum_{|u|=n} \ind{\theta^* V(u) - K_n \geq - \beta \theta^* \log n} \ind{\theta^* V(u_j) - K_j \leq \log n, j \leq n}.
\end{equation}
We observe that
\[
  \rmP_\calL\left( M_n \geq \frac{K_n}{\theta^*} - \beta \log n \right) \leq \rmP_\calL\left(  \exists u \in \T : V(u) \geq \frac{K_{|u|}+ \log n}{\theta^*} \right) + \rmP_\calL(Y_n(\beta) \geq 1).
\]	
Applying Lemma \ref{lem:frontier} and the Markov inequality, we have
\begin{equation}
  \label{eqn:majoration}
  \rmP_\calL\left( M_n \geq \frac{K_n}{\theta^*} - \beta \log n \right) \leq n^{-1} + \rmE_\calL\left( Y_n(\beta) \right)
\end{equation}
To bound from above $\rmE_\calL\left( Y_n(\beta) \right)$ we use \eqref{eqn:manymodified}, obtaining
\begin{align*}
  \rmE_\calL\left( Y_n(\beta) \right)
  &= \rmE_\calL\left( e^{-\theta^* S_n + K_n} \ind{\theta^* S_n - K_n \geq -\beta \theta^* \log n} \ind{\theta^* S_j - K_j \leq \log n, j \leq n} \right)\\
  &\leq n^{\beta \theta^*} \rmP_\calL\left( T_n \geq -\beta \theta^* \log n, T_j \leq \log n, j \leq n \right).
\end{align*}
By \eqref{eqn:randomUpperExcursion}, for any $\epsilon>0$ we have
\[
  \lim_{n \to  \infty} \P\left( \frac{\rmP_\calL\left( T_n \geq - \beta \theta^* \log n, T_j \leq \log n, j \leq n \right)}{\log n} \leq -\lambda +\epsilon \right) = 1.
\]
As $\phi = \frac{\lambda}{\theta^*}$, we conclude
\[
  \lim_{n \to  \infty} \P\left( \left\{ \rmP_\calL\left( T_n \geq - \beta \theta^* \log n, T_j \leq \log n, j \leq n \right) \leq n^{-\theta^*\phi +\epsilon} \right\}\right) = 1.
\]
Setting $\epsilon = \frac{\theta^*}{2} (\phi - \beta)>0$, we have $\lim_{n \to  \infty} \P\left( \left\{\rmE_\calL\left( Y_n(\beta) \right) \leq n^{-\epsilon} \right\}\right) = 1$.
Therefore, $\rmE_\calL\left( Y_n(\beta) \right)$ converges to $0$ in $\P$-probability. By \eqref{eqn:majoration} we conclude the proof.
\end{proof}

\subsection{Proof of the lower bound of Theorem \ref{thm:logCorrection}}

To prove the lower bound of Theorem \ref{thm:logCorrection}, we first prove that the probability to observe individuals above $\frac{K_n}{\theta^*} - \phi \log n$ does not decrease too fast. Secondly, we use the fact that the population grows at exponential rate to conclude the proof.
\begin{lemma}
\label{lem:lowerbound}
We assume that \eqref{eqn:defTheta}, \eqref{eqn:secondMoment}, \eqref{eqn:defVariances} and \eqref{eqn:integrabilityQuenched} hold. For any $\epsilon > 0$, we have
\[
  \lim_{n \to  \infty} \P\left( \left\{ \rmP_\calL \left( M_n \geq \frac{K_n}{\theta^*} - \phi \log n \right) \geq n^{-\epsilon} \right\} \right) = 1.
\]
\end{lemma}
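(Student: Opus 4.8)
The plan is to implement a second-moment (Paley--Zygmund) argument on a suitably truncated additive functional, combined with the lower bound on the excursion probability from Theorem~\ref{thm:randomExcursion}. For $n \in \N$, $\beta > 0$ and parameters to be chosen, I would introduce a counting random variable of the form
\[
  Z_n = \sum_{|u|=n} \ind{\theta^* V(u) - K_n \geq -\beta \theta^* \log n} \ind{\theta^* V(u_j) - K_j \leq x_j, j \leq n},
\]
where the barrier $x_j = \min(j,n-j)^\alpha$ (or a variant with an extra $O(n^{1/2})$ cushion in the bulk, as in the statement of \eqref{eqn:randomLowerExcursion}) enforces that the selected individuals make a negative excursion of the type controlled in Section~\ref{sec:excursion}. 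Since $\{M_n \geq \tfrac{1}{\theta^*} K_n - \beta \log n\} \supseteq \{Z_n \geq 1\}$, it suffices to show that $\rmP_\calL(Z_n \geq 1) \geq n^{-\epsilon}$ with $\P$-probability tending to $1$, for $\beta$ slightly larger than $\phi$; the final statement for $\beta = \phi$ then follows by a routine monotonicity/diagonal argument, possibly after first proving it with $\phi$ replaced by $\phi + \epsilon'$ and letting $\epsilon' \to 0$, or by noting that the event in the lemma is increasing in the threshold.

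The first main step is the first-moment lower bound: using the many-to-one identity \eqref{eqn:manymodified},
\[
  \rmE_\calL(Z_n) = \rmE_\calL\left( e^{-T_n} \ind{T_n \geq -\beta \theta^* \log n}\ind{T_j \leq x_j, j \leq n} \right) \geq c\, n^{\beta \theta^*}\, \rmP_\calL\left( T_n \in I_n, T_j \leq x_j, j \leq n \right),
\]
where $I_n$ is an interval of length $O(1)$ near $-\beta\theta^*\log n$; by the lower bound \eqref{eqn:randomLowerExcursion} of Theorem~\ref{thm:randomExcursion} (applied after the time-reversal trick that identifies the event with a random walk over a wall), this probability is at least $n^{-\lambda - \epsilon}$ with high $\P$-probability, so $\rmE_\calL(Z_n) \geq c\, n^{\beta\theta^* - \lambda - \epsilon}$, which for $\beta > \phi = \lambda/\theta^*$ is a positive power of $n$. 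The second main step is the second-moment upper bound $\rmE_\calL(Z_n^2) \leq C\, \rmE_\calL(Z_n)^2 \cdot n^{o(1)}$, or more precisely a bound of the form $\rmE_\calL(Z_n^2) \le \rmE_\calL(Z_n) \cdot \sup (\text{something})$ that is polynomially comparable. This uses a many-to-two (spine decomposition at the branching point) computation: $\rmE_\calL(Z_n^2)$ splits according to the generation $k$ at which two chosen individuals separate, and one bounds, for each $k$, the contribution by the probability that a single path performs the excursion up to time $k$ (cost $\approx k^{-\gamma}$ on the relevant half), and then two independent paths from a common point at height $\le x_k$ complete their excursions; the integrability assumptions \eqref{eqn:secondMoment} and \eqref{eqn:integrabilityQuenched} control the branching contribution, and the barrier forces the splitting height to be low enough that the two subtrees each pay the full remaining excursion cost, so that $\sum_k (\text{contribution}_k)$ telescopes to something of order $\rmE_\calL(Z_n)^2$ up to sub-polynomial factors. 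Then Paley--Zygmund gives $\rmP_\calL(Z_n \geq 1) \geq \rmE_\calL(Z_n)^2 / \rmE_\calL(Z_n^2) \geq n^{-\epsilon}$ on an event of $\P$-probability tending to $1$, using that all the excursion and local-CLT estimates from Section~\ref{sec:excursion} hold in $\P$-probability uniformly in the relevant shifts.

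The hardest part, and the one I would spend the most care on, is the second-moment estimate: the environment is random, so the per-generation contributions $\mathrm{contribution}_k$ are themselves random, and the naive bound $\sum_{k} n^{\gamma - 2\gamma} \times (\text{stuff})$ requires that the "excursion from a low point of length $m$" probabilities behave like $m^{-\gamma}$ \emph{uniformly} over starting times $k \le x_n$ and over low starting heights, which is exactly what Lemma~\ref{lem:uniformRwOverRw}, the time-reversal identity \eqref{eq:backward}, and Lemma~\ref{lem:lltBelowWithWall} are designed to provide, but stitching them together along the whole range $1 \le k \le n$ (including the delicate regime $k$ close to $n$, where one must re-use \eqref{eqn:randomLowerExcursion}/\eqref{eqn:randomUpperExcursion} for the short remaining pieces) is where the technical bookkeeping concentrates. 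A secondary obstacle is that $\rmE_\calL(Z_n)$ is only bounded below \emph{in $\P$-probability}, not almost surely, so all "high probability" events must be intersected and the constants tracked so that a single good event of $\P$-probability $\ge 1-\eta$ carries both the first- and second-moment bounds simultaneously; letting $\eta \to 0$ at the end closes the argument.
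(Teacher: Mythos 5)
Your overall strategy (truncated count, many-to-one, Paley--Zygmund, excursion estimates from Section \ref{sec:excursion}) is the same as the paper's, but two of your steps have genuine gaps. First, the reduction from $\beta>\phi$ to $\beta=\phi$ goes the wrong way: the event $\{M_n \geq \tfrac{1}{\theta^*}K_n - \beta\log n\}$ is \emph{increasing} in $\beta$, so a lower bound on its probability for $\beta=\phi+\epsilon'$ says nothing about the event at level $\phi$, and a diagonal choice $\epsilon'_n\to 0$ only reaches the level $\phi\log n$ if $\epsilon'_n\log n=O(1)$, in which case your first moment is no longer a positive power of $n$ and you are back to the critical case you were trying to avoid. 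The paper works directly at level $\phi$: it counts particles ending in a window of width $\delta\log n$ below $\tfrac{1}{\theta^*}K_n-(\phi-\delta)\log n$, accepts a first moment that is only $\geq n^{-\theta^*\delta-\epsilon}$ with high $\P$-probability (via \eqref{eqn:randomLowerExcursion}), and only proves $\rmE_\calL(X_n(\delta)^2)\leq n^{2\theta^*\delta+2\epsilon}$; Cauchy--Schwarz then gives $n^{-4\theta^*\delta-4\epsilon}$, which suffices since the target is merely $n^{-\epsilon}$. In particular the paper never needs (and does not prove) $\rmE_\calL(Z_n^2)\lesssim n^{o(1)}(\rmE_\calL Z_n)^2$.

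Second, and more seriously, your second-moment bound would fail with the barrier you propose. Keeping particles below $\tfrac{1}{\theta^*}K_j+\min(j,n-j)^\alpha$ (a nonnegative barrier), the decomposition over the branching generation $k$ produces, after the change of measure, a factor of the form $\rmE_\calL\left(e^{T_k}\ind{T_j\leq x_j,\,j\leq k}\right)$, which with your barrier can be as large as $e^{\min(k,n-k)^\alpha}$; to beat this you would need ballot-type estimates with sharp polynomial dependence on the starting height, uniformly in $k$ and in the height, which the random-environment estimates of Section \ref{sec:excursion} do not provide (they are log-scale asymptotics, in $\P$-probability only). The missing idea is the shape of the killing barrier: the paper forces paths \emph{below} the line by $r_{n,j}=j^{1/3}-\delta\log n$ for $j\leq n/2$ and $r_{n,j}=(n-j)^{1/3}+(\phi-\delta)\log n$ for $j>n/2$, so that for branching times $k\in[(\log n)^6, n-(\log n)^6]$ one has $T_k\leq-\theta^*\left((\log n)^2-\delta\log n\right)$ and the factor $e^{T_k}$ is superpolynomially small, making the bulk contribution to $\rmE_\calL(X_n(\delta)^2)$ negligible without any fine control; the $O((\log n)^6)$ generations at each end are then handled by the crude uniform bounds $P^{\mathrm{start}}_n, P^{\mathrm{end}}_n\leq n^{-\theta^*\phi+\epsilon}$ from \eqref{eqn:randomUpperExcursion}, with \eqref{eqn:secondMoment} controlling the offspring-overlap factors $\Phi^{\mathrm{start}}_n,\Phi^{\mathrm{end}}_n,\Phi_n$. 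Without such a low-forcing barrier (or an equivalent device), the claim that the sum over branching times ``telescopes to $(\rmE_\calL Z_n)^2$ up to sub-polynomial factors'' is unsubstantiated, and this, rather than bookkeeping, is where your argument breaks.
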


\begin{proof}
In the proof we use the second moment method. We introduce an environment-dependent path which individuals are disallowed to cross. This enforces the first and second moment to have the same behavior. Let $\delta > 0$, for $k \leq n$ we set 
\[r_{n,k} =
\begin{cases}
  k^{1/3} - \delta \log n & \mathrm{if} \quad k \in \{1, \ldots, \floor{n/2}\} \\
  (n-k)^{1/3} +(\phi -\delta) \log n & \mathrm{if} \quad k \in \{\floor{n/2}+1, \ldots, n\}.
\end{cases}
\]
In $X_n(\delta)$ we count particles near $K_n/\theta^* - \varphi \log n$ who stayed below\footnote{Note that the choice of the exponent $1/3$ in the definition of $r_{n,k}$ is arbitrary, any exponent $\gamma \in (0,1/2)$ would yield a similar result, using the result in Theorem \ref{thm:randomExcursion}.} $K_j/\theta^* -r_{n,j}$ at all time $j \leq n$ viz.
\[
  X_n (\delta) = \sum_{|u|=n} \ind{V(u) - K_n/\theta^* + \phi \log n \in [0,\delta \log n], V(u_j) \leq K_j/\theta^* - r_{n,j}}.
\]
We use the Cauchy-Schwarz inequality to get 
\begin{equation}
  \label{eqn:cauchySchwarz}
  \rmP_\calL\left( M_n \geq \frac{K_n}{\theta^*} - \beta \log n \right)\geq \rmP_\calL(X_n(\delta) \geq 1) \geq \frac{\left(\rmE_\calL X_n(\delta)\right)^2}{\rmE_\calL\left( X_n(\delta)^2 \right)}.  
\end{equation}

\begin{figure}
\centering
\begin{tikzpicture}
\draw [->] (0,-1) -- (0,6) node[left] {space};

\draw [->] (-0.2,0) node [left] {0} -- (10,0) node[below] {time};
\draw [color = red] (0,6) node[right] {$\frac{K_n}{\theta^*} + y$};

\draw [very thick, color=red] ( 0,0.5 ) -- (0.5,0.533) -- (1.0,1.278) -- (1.5,1.957) -- (2.0,1.977) -- (2.5,2.383) -- (3.0,2.728) -- (3.5,2.97) -- (4.0,3.102) -- (4.5,3.265) -- (5.0,3.738) -- (5.5,3.896) -- (6.0,3.992) -- (6.5,4.638) -- (7.0,5.06) -- (7.5,5.336) -- (8.0,5.4) -- (8.5,5.532) -- (9.0,5.439);

\draw [<->,thick, color=green]  (9.0,5.44) -- (9.0,4.56);
\draw [color = green] (9,5) node[right] {$\phi \log n$};
\draw [<->,thick, color=red]  (9.0,4.2) -- (9.0,4.56);
\draw [color = red] (9,4.38) node[right] {$\delta \log n$};

\draw [color = blue, densely dashed, domain=0:9,samples=100] plot(\x, {0.3-0.3*sqrt((\x+0.1)*(9.1-\x))+0.55*\x + 0.5});
\draw [<->,color = blue] (4.5,3.265) -- (4.5,1.85);
\draw [color = blue] (4.5,2.55) node[right] {$r_{n,j}$};
\draw ( 0,0 ) -- (0.5,0.184) -- (1.0,0.209) -- (1.5,0.258) -- (2.0,0.302) -- (2.5,0.341) -- (3.0,0.362) -- (3.5,0.56) -- (4.0,1.133) -- (4.5,1.253) -- (5.0,1.978) -- (5.5,2.152) -- (6.0,2.212) -- (6.5,2.974) -- (7.0,3.104) -- (7.5,3.58) -- (8.0,4.138) -- (8.5,4.181) -- (9.0,4.25);
\draw (8.5,4.181) -- (9.0,4.55);
\draw ( 0,0 ) -- (0.5,0.19) -- (1.0,0.213) -- (1.5,0.338) -- (2.0,0.523) -- (2.5,0.613) -- (3.0,0.635) -- (3.5,0.841) -- (4.0,1.057) -- (4.5,1.452) -- (5.0,1.533) -- (5.5,1.899) -- (6.0,2.134) -- (6.5,2.321) -- (7.0,2.731) -- (7.5,3.438) -- (8.0,4.024) -- (8.5,4.406) -- (9.0,4.52);
\draw ( 0,0 ) -- (0.5,0.088) -- (1.0,0.125) -- (1.5,0.178) -- (2.0,0.287) -- (2.5,0.42) -- (3.0,0.423) -- (3.5,0.98) -- (4.0,1.339) -- (4.5,1.551) -- (5.0,1.824) -- (5.5,1.975) -- (6.0,2.151) -- (6.5,2.194) -- (7.0,2.196) -- (7.5,2.8) -- (8.0,3.548) -- (8.5,4.276) -- (9.0,4.37);
\draw [densely dotted] (0.5,0.088) -- (1, - 0.2);
\draw [densely dotted] (6.0,2.151) -- (6.5,1.89);
\draw [densely dotted] (8.0,3.548) -- (8.5,3.5);
\draw [densely dotted] (3.5,0.56) -- (4,0.7);
\draw [densely dotted] (6.5,2.321) -- (7,2.1);
\draw [densely dotted] (7.0,2.196) -- (7.5,2.4);
\end{tikzpicture}
\caption{Some trajectories counted in $X_n(\delta)$}
\end{figure}
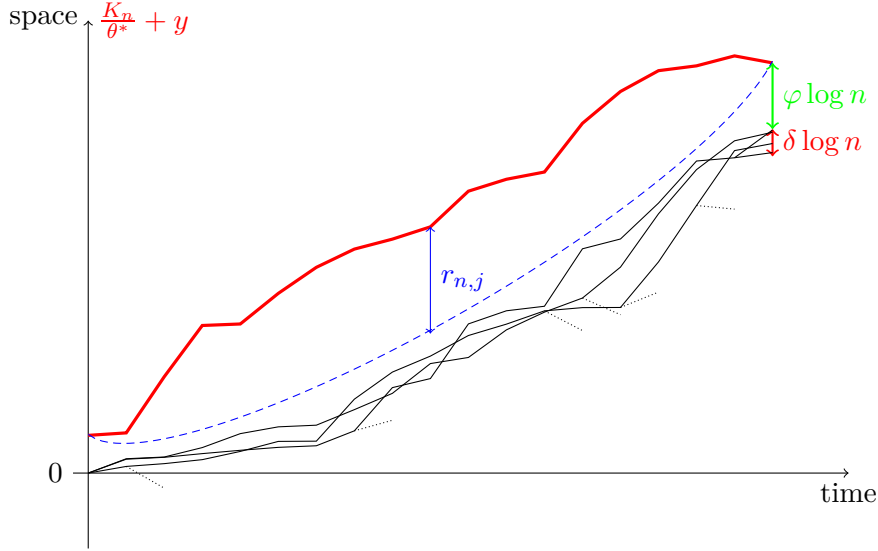

We first bound from below $\rmE_\calL(X_n(\delta))$. Applying \eqref{eqn:manytoone} and recalling \eqref{eqn:defT}, we have
\begin{align*}
  \rmE_\calL\left(X_n(\delta)\right) &=  \rmE_\calL\left( e^{-T_n} \ind{T_n/\theta^* + \phi \log n \in \left[0,\delta \log n \right]} \ind{T_j/\theta^* \leq -  r_{n,j}, j \leq n} \right)\\
  &\geq n^{\theta^* \phi - \theta^* \delta} \rmP_\calL\left( T_n/\theta^* +  \phi \log n \in \left[0,\delta  \log n\right], T_j/\theta^* \leq -  r_{n,j} , j \leq n \right).
\end{align*}
Using \eqref{eqn:randomUpperExcursion}, for any $\epsilon>0$ we have
\begin{equation}
  \label{eqn:mean}
  \lim_{n \to  \infty} \P\left( \rmE_\calL \left( X_n(\delta) \right) \geq n^{-\theta^* \delta-\epsilon} \right) = 1.
\end{equation}

Further, we bound from above $\rmE\left( X_n(\delta)^2 \right)$. We note that $X_n(\delta)^2$ is the number of pairs of individuals that are at time $n$ in a neighborhood of $K_n/\theta^* - \phi \log n$ and stayed at any time $k \leq n$ at distance at least $r_{n,k}$ from $K_k/\theta^*$. We partition this set of pairs $(u^1,u^2) \in \T$ according to the most recent common ancestor, denoted by $u^1 \wedge u^2$. More precisely, $X_n(\delta)^2 = \sum_{k=0}^{n} \Lambda_{k}$,  where 
\begin{equation*}
	\Lambda_k = \sum_{|u| = k} \sum_{\substack{|u^1| = |u^2| = n\\ u^1 \wedge u^2 = u}} \ind{V(u^i) - K_n/\theta^* + \phi \log n \in [0,\delta \log n], V(u^i_j) \leq K_j/\theta^* - r_{n,j}, i \in \{1,2\},j\leq n}.
\end{equation*}
We notice that $\Lambda_n = X_n (\delta)$. For $k<n$ we study $\Lambda_k$ applying the Markov property at time $k+1$, namely we denote $\calF_k = \sigma(u,V(u), |u| \leq k)$ and calculate
\begin{equation}
	\rmE_\calL\left( \left.\Lambda_k \right| \calF_{k+1} \right) \leq \sum_{|u|=k} \ind{V(u_j) \leq K_j/\theta^* -r_{n,j}, j \leq k} \sum_{\substack{|u^1| = |u^2| = k+1\\ u^1 \wedge u^2 = u}} f_{k+1}(V(u^1)) f_{k+1}(V(u^2)),\label{eq:LambdaDecomposed}
\end{equation}
where
\[
  f_{k+1}(x) =
  \rmE^{k+1}_\calL\left( \sum_{|u|=n-k-1} \ind{\begin{array}{l}\scriptstyle V(u) - K_n/\theta^*+x + \phi \log n \in [0,\delta \log n]\\ \scriptstyle V(u_j)-{K_{k+j+1}}/{\theta^*}+x \leq  - r_{n,k+j+1}, j \leq n-k-1\end{array}} \right).
\]
Note that if $x \geq K_{k+1}/\theta^* - r_{n,k+1}$ then $f_{k+1}(x) = 0$.

We recall that under law $\rmP^k_\calL$, for any $n \in \N$ we write $T_n = \theta^* S_n - \sum_{j=k+1}^{k+n} \kappa_j(\theta^*)$ and $\sum_{j=k+1}^{k+n} \kappa_j(\theta^*)= K_{k+n}-K_k$. Applying \eqref{eqn:manytooneGeneral} we have
\[
  f_{k+1}(x) \leq \rmE^{k+1}_\calL\left( e^{- T_{n-k-1}} \ind{\begin{array}{l} \scriptstyle T_{n-k-1} + \theta^*x-K_{k+1} + \theta^*\phi \log n \in [0, \theta^*\delta \log n]\\ \scriptstyle T_j \leq  K_{k+1}-\theta^*x - \theta^*r_{n,j+k+1}, j \leq n-k-1\end{array}} \right).
\]
Observe that for any $j \leq n$, $r_{n,j}\geq -\delta \log n$. We have
\begin{equation}
	f_{k+1}(x) \leq n^{\theta^* \phi} e^{\theta^* x - K_{k+1}} \rmP^k_\calL\left( \begin{array}{l}  T_{n-k-1}- K_{k+1} + \theta^* x  + \theta^*\phi \log n \in [0,\theta^*\delta \log n]\\  T_j \leq K_{k+1}-\theta^*x + 2 \theta^*\delta \log n, j \leq n-k-1 \end{array} \right). \label{eq:fk1estimate}
\end{equation}
We set $b_n = \ceil{(\log n)^6}$, and bound from above $\rmE_\calL\left(\Lambda_k\right)$ in three different manners depending whether $k \leq b_n$, $k \geq n-b_n$ or $k \in [b_n,n-b_n]$. We write
\begin{multline*}
  \Phi^\text{start}_n = \max_{k \leq b_n} \rmE_\calL\left( \left(\sum_{\ell \in L_{k}}  e^{\theta^* \ell - \kappa_{k}(\theta)} \right)^2 \right), \ \Phi^\text{end}_n = \max_{k \in [n-b_n,n]} \rmE_\calL\left( \left(\sum_{\ell \in L_{k}}  e^{\theta^* \ell - \kappa_{k}(\theta)} \right)^2 \right)\\
  \text{ and } \Phi_n = \max_{k \leq n} \rmE_\calL\left( \left(\sum_{\ell \in L_{k}}  e^{\theta^* \ell - \kappa_{k}(\theta)} \right)^2 \right).
\end{multline*}
By \eqref{eq:fk1estimate}, $f_{k+1}(V(u) + \ell) \leq n^{\theta^* \phi} e^{\theta^* \ell - \kappa_{k+1}(\theta^*)} e^{\theta^* V(u) - K_k}$ for $k \in [b_n,n-b_n]$ thus \eqref{eq:LambdaDecomposed} yields
\begin{align}
  &\rmE_\calL\left( \Lambda_k \right)\nonumber\\
  \leq &n^{2\theta^* \phi} \rmE_\calL\left( \sum_{\ell, \ell' \in L_{k+1}} e^{\theta^* \left( \ell + \ell'\right) - 2 \kappa_{k+1}(\theta^*)} \right)\rmE_\calL\left( \sum_{|u|=k} e^{2\left(\theta^* V(u_k) - K_k\right)} \ind{V(u_j) \leq K_j/\theta^* -r_{n,j}, j \leq k} \right) \nonumber\\
  \leq & n^{2\theta^* \phi} \Phi_n\rmE_\calL\left( e^{T_k} \ind{T_j \leq - \theta^* r_{n,j}, j \leq k} \right) \label{eq:joinedEstimates}.
\end{align}
by \eqref{eqn:manytoone}. For $k\in[b_n,n-b_n]$ we have $r_{n,k} \geq (\log n)^2-\delta \log n$ thus we obtain
\begin{equation}
  \label{eqn:gestionMiddlePart}
  \rmE_\calL\left( \Lambda_k \right) \leq \Phi_n n^{\theta^* (2\phi-\delta)} e^{-\theta^* (\log n)^2}.
\end{equation}

Secondly, we consider $k \in [n-b_n,n-1]$. We have $r_{n,k} \geq (\phi-\delta) \log n$ and analogously to \eqref{eq:joinedEstimates} we get
\[
  \rmE_\calL\left( \Lambda_k \right) \leq  \Phi^\text{end}_n n^{2\theta^* \phi} \rmE_\calL\left( e^{T_k} \ind{T_j \leq - \theta^* r_{n,j}, j \leq k} \right),
\]
we decompose this expectation depending on the endpoint $T_k$, we obtain
\begin{align}
  \rmE_\calL\left( \Lambda_k \right) &\leq  \Phi^\text{end}_n n^{2\theta^* \phi} \rmE_\calL\left( e^{T_k} \ind{T_j \leq - \theta^* r_{n,j}, j \leq k} (\ind{T_k \leq -b_n} + \ind{T_k \geq -b_n}) \right)\nonumber\\
  &\leq \Phi^\text{end}_n n^{2\theta^* \phi} \left( e^{-b_n} + n^{-\theta^*(\phi - \delta)} \rmP_\calL\left( T_k \geq -b_n, T_j \leq - \theta^* r_{n,j}, j \leq k\right) \right) \nonumber\\
  &\leq \Phi^\text{end}_n \left( n^{\theta^* (\phi + \delta)} P^\text{end}_n +  e^{-\theta^*(\log n)^2}\right), \label{eqn:gestionEndPart}
\end{align}
for $n \geq 1$ large enough, where
\[
  P^\text{end}_n = \max_{k \in \{n - b_n +1, \ldots,n \} } \rmP_\calL\left( T_k \geq -b_n,  T_j \leq -\theta^* r_{n,j}, j \leq k \right).
\]
Finally, we deal with $k\leq b_n$. We use \eqref{eq:fk1estimate} to obtain 
\[
  f_{k+1}(x) \leq n^{\theta^* \phi} e^{\theta^* x - K_{k+1}} \left(P^\text{start}_n \ind{\theta^* x - K_{k+1} \geq -b_n} + \ind{\theta^* x - K_{k+1} \leq -b_n} \right),
\]
where
\[
  P^\text{start}_n = \sup_{k \leq b_n} \sup_{y \leq b_n} \rmP^k_\calL\left(
  \begin{array}{l}
    T_{n-k-1} + \theta^*(\phi \log n - y) \in [0,\delta \log n],\\
    T_j \leq \theta^*(2 \delta \log n + y), j \leq n-k-1
  \end{array}
  \right).
\]
Applying \eqref{eqn:manytoone} similarly to \eqref{eq:joinedEstimates} and recalling that $r_{n,k} \geq -\delta \log n$ we get
\[
    \rmE_\calL\left( \Lambda_k \right) \leq \Phi^\text{start}_n n^{2\theta^* \phi} \rmE_\calL\left( e^{T_k} \left((P^\text{start}_n)^2\ind{T_k \leq \theta^*\delta \log n} + \ind{T_k \leq - b_n}  \right)\right).
\]
Thus for all $n$ large enough,
\begin{equation}
  \label{eqn:gestionStartPart}
  \rmE_\calL\left( \Lambda_k\right) \leq \Phi^\text{start}_n\left[ n^{2\theta^* (\phi + \delta)}(P^\text{start}_n)^2 + e^{-\theta^*(\log n)^2/2}\right].
\end{equation}
As $\rmE_\calL\left( X_n(\delta)^2 \right) = \sum_{k=1}^n \rmE_\calL\left( \Lambda_k\right)$, gathering \eqref{eqn:gestionMiddlePart}, \eqref{eqn:gestionEndPart} and \eqref{eqn:gestionStartPart}, we obtain
\begin{multline*}
  \rmE_\calL\left( X_n(\delta)^2 \right) \leq b_n \Phi^\text{start}_n \left(n^{2 \theta^* (\phi + \delta)} (P^\text{start}_n)^2 + e^{-\theta^*(\log n)^2/2}\right) + n^{1+\theta^* (2\phi-\delta)}  \Phi_n e^{-\theta^*(\log n)^2/2}\\
  + b_n \Phi^\text{end}_n \left( n^{\theta^* (\phi + \delta)} P^\text{end}_n  + e^{-\theta^*(\log n)^2/2} \right) + \rmE_\calL \left( X_n(\delta) \right).
\end{multline*}
Using \eqref{eqn:secondMoment}, we observe that for any $\eta>0$ there exists $x \geq 0$ such that for any $n \in \N$
\[
  \P\left( \Phi^\text{start}_n \geq x b_n \right) + \P\left( \Phi^\text{end}_n \geq x b_n \right) + \P\left( \Phi_n \geq x n \right) \leq \eta.
\]
Further, by \eqref{eqn:randomUpperExcursion} and \eqref{eqn:defLambda}, for any $\epsilon>0$,
\[
  \lim_{n \to  \infty} \P\left( P^\text{start}_n \leq n^{-\theta^* \phi + \epsilon} \right) = 1, \quad  \lim_{n \to  \infty} \P\left( P^\text{end}_n \leq n^{-\theta^* \phi + \epsilon} \right) = 1.
\]
We recall \eqref{eq:ybeta}, we have $\E(X_n(\delta)) \leq \E(Y_n(\delta))$. Applying Lemma \ref{lem:upperbound}, we conclude that for any $\epsilon>0$ and $\eta > 0$, $\liminf_{n \to  \infty} \P\left( \rmE_\calL\left( X_n(\delta)^2 \right) \leq n^{2\theta^*\delta + 2\epsilon} \right) \geq 1-\eta.$
Letting $\eta \to 0$ we obtain
\begin{equation}
  \label{eqn:variance}
  \lim_{n \to  \infty} \P\left(\rmE_\calL\left( X_n(\delta)^2 \right) \leq n^{2\theta^*\delta + 2\epsilon} \right) = 1.
\end{equation}
Finally using \eqref{eqn:cauchySchwarz}, \eqref{eqn:mean} and \eqref{eqn:variance} for any $\delta > 0$ we have
\[
  \lim_{n \to  \infty} \P\left( \rmP_\calL\left( M_n \geq \frac{K_n}{\theta^*} - \phi \log n \right) \geq n^{-4\theta^* \delta -4\epsilon} \right) = 1.
\]
Choosing $\epsilon,\delta >0$ small enough we conclude the proof.
\end{proof}

\begin{proof}[Proof of Theorem \ref{thm:logCorrection}]
Lemma \ref{lem:upperbound} covers the case of $\beta < \phi$. We are now left to prove that for $\beta > \phi$ we have
\[
  \lim_{n \to  \infty} \rmP_\calL\left( M_n \geq \frac{K_n}{\theta^*} - \beta \log n \right) = 1 \quad \text{in } \P\text{-probability}.
\]
To do so, we use the fact that the population grows at exponential rate, and that each individual in the branching random walk alive at a given time starts an independent branching random walk.

Let $A \in \N$, we set $\T^A$ the subtree of $\T$ consisting in the set of particles that never made a jump smaller than $-A$. We also trim the tree so that the maximal number of offspring is $A$ (for example by choosing the $A$ largest children). We denote by $N_n^A = \#\left\{ u \in \T^A : |u| =n \right\}$. For $A$ large enough, by \cite[Theorem 5.5 (iii)]{Tanny:1977aa} and \eqref{eqn:supercritical}, there exists a constant $\rho_A>1$  such that
\begin{equation}
\liminf_{n\to \infty}\left(N_{n}^{A}\right)^{1/n}=\rho_{A},\quad \envAS\label{eq:fastgrowth}
\end{equation}
on the set $\mathcal{A}_{A}=\left\{ \lim_{n\to \infty}N_n^A = \infty\right\}$. Observe that $V(u) \geq - A |u|$ for any $u \in \T^A$.

For $n \in \N$, we set $\epsilon = \frac{\beta-\phi}{4}$ and $k = \ceil{\epsilon \log n}$ and $\mathcal{B}_{n}=\left\{ |K_{k}|\leq\epsilon\theta^{*}\log n\right\} $. As each of the individuals alive at generation $k$ starts an independent BRWre with environment $(\calL_{k+j}, j \in \N)$, applying the Markov property at time $k$,
\begin{align*}
1_{\mathcal{B}_{n}}\rmP_\calL\left(M_{n}\leq K_{n}/\theta^{*}-\beta\log n\right) & \leq1_{\mathcal{B}_{n}}\rmP_\calL^{k}\left(M_{n-k}\leq K_{n}/\theta^{*}-\beta\log n+Ak\right)^{N_{k}^{A}}\\
 & \leq1_{\mathcal{B}_{n}}\rmP_\calL^{k}\left(M_{n-k}\leq(K_{n}-K_{k})/\theta^{*}-\varphi\log(n-k)\right)^{N_{k}^{A}},
\end{align*}
\envAS for $n$ large enough. Let $r_A \in (1,\rho_A)$ and denote the event 
\[
\mathcal{C}_{n}=\left\{ \rmP_\calL^{k}\left(M_{n-k}\leq(K_{n}-K_{k})/\theta^{*}-\varphi\log(n-k)\right)\leq1-r_{A}^{-k}\right\} .
\]
By \eqref{eq:fastgrowth}, we conclude that
\[
1_{\mathcal{A}_{A}\cap\mathcal{B}_{n}\cap\mathcal{C}_{n}}\rmP_\calL\left(M_{n}\leq K_{n}/\theta^{*}-\beta\log n\right)\to0,\quad \envAS
\]
By translation invariance and Lemma \ref{lem:lowerbound} we have $\lim_{n \to  \infty} \P(\mathcal{C}_{n}) = 1$. As $|K_k|< \infty$ \envAS we have $\lim_{n \to  \infty} \P(\mathcal{B}_{n})=1$. This yields
\[
1_{\mathcal{A}_{A}}\rmP_\calL\left(M_{n}\leq K_{n}/\theta^{*}-\beta\log n\right)\to0,\quad\text{in }\P\text{-probability}.
\]
Finally, we observe that when $A\nearrow \infty$ we have $1_{\mathcal{A}_{A}}\nearrow1$
by the assumption in \eqref{eqn:nonExtinction}.
\end{proof}

\appendix

\section{Precise behavior of the median and tightness} \label{sec:tightness}
In this section we will discuss how the results of Theorem \ref{thm:logCorrection} can be restated and refined. We recall that $m^Q_n$, defined in \eqref{eqn:defQuenchedMedian}, is the median of $M_n$ conditionally on the environment. We would like to describe the asymptotic behavior of $m^Q_n$ as simply as possible. We recall that $(K_n,n\in \N)$ is a random walk measurable with respect to the environment. Theorem~\ref{thm:logCorrection} yields that $m^Q_n \approx K_n$ and more precisely
\[
	m^Q_n = K_n - \phi \log n + o_P(\log n). 
\]
However, the sequence hidden in $o_P$ is not trivial i.e.  $m^Q_n - K_n +\phi \log n$ does not converge \envAS (see Lemma \ref{lem:backward}). This happens because every now and again the environment can speed up (or slow down) every individual in the process in a way not captured correctly by $K_n$. It is not clear for us if there exists a simple function of the environment which describes $m^Q_n$ more precisely. Finding such a function might be an interesting research task. In this section we discuss another question: the tightness of $(M_n-m^Q_n, n \in \N)$.

\subsection{Quenched tightness} We fix the environment $\calL$ and ask whether the sequence $\{\Delta_n\}_{n\geq 1}$ defined by
\[
	\Delta_n = M_n - m^Q_n,
\]
is tight (with respect to $\rmP_{\calL}$). For some cases this question has been answered positively in \cite{Fan12}. We skip lengthy description of conditions referring the reader to \cite[Section 2]{Fan12} and \cite[Section 5]{Fan12}. Instead, we brief in words the ones of \cite[Section 2]{Fan12}. We require that the branching law has uniformly bounded support and the mean offspring number is uniformly bounded away from $1$. The law of displacements need to be such that there exists uniform $x_0$ such that particles moves above $x_0$ with high probability. Further, the marginal of the displacement decays exponentially (with an uniform exponent). Finally, we assume that with high and uniform probability all particles born in one event stay in a ball of an uniform size. The setting of \cite{Fan12} is quite general and thus these conditions are somewhat restrictive. We suspect uniformization effects in our case of i.i.d. environments. We are quite convinced that the condition about branching laws can be relaxed and that it is enough to assume only mild moment conditions. The situation of the displacement law is less clear. Let us illustrate this on a concrete example. Consider a system with the dyadic branching and the displacements being $\mathcal{N}(0, \sigma^2_n)$,  where $\{\sigma_n^2\}$ is an i.i.d. environment sampled from a exponentially integrable distribution with an unbounded support. We pose as an open question determining whether this $\{\Delta_n\}$ is tight. 

\subsection{Annealed tightness} Another way is to study the tightness in the annealed setting. One way to state this question would be to ask whether the sequence 
\[
	\{M_n - \rmE_{\calL} M_n\}_{n\geq 0},
\]
is tight with respect to the joint law of the environment and the branching law i.e. $\P$. This is a weaker notion than the quenched tightness and we suspect this tightness may hold under quite general conditions. Before discussing further, let us present a vanilla version of the Host-Dekking argument \cite{Dekking:1991aa}.

\begin{fact}
Assume that $\P(\#\{ \ell \in L_1 \} \geq 2 )=1$ and \mbox{$C=\E \min_{l\in L_1} l>-\infty$}. Then for $n\geq 1$ we have
\[
\E|M_{n}-{\rm E}_{\calL}M_{n}|\leq 2 |\E M_{n+1} - \E M_{n} - C|.
\]
\end{fact}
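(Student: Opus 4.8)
The plan is to exploit the superadditivity structure of the maximal displacement across generations, much as in the classical Horst--Dekking (Kingman--Hammersley) argument, but keeping track of expectations carefully. First I would observe that, conditionally on the environment, the branching random walk of length $n+1$ can be decomposed as follows: run one generation (using $\calL_1$), and then from each child $u$ with $|u|=1$ run an independent copy of the BRWre in the shifted environment $(\calL_{j+1}, j \in \N)$ of length $n$. Hence, picking any single child $u^\star$ attaining $\min_{\ell \in L_1}\ell$ (or more simply any one fixed child), we get the pointwise lower bound
\[
  M_{n+1} \geq V(u^\star) + \widetilde M_n,
\]
where $\widetilde M_n$ is the maximal displacement at generation $n$ of the BRWre started from $u^\star$ in the environment $(\calL_{j+1})$, independent of $\calL_1$ given the environment. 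Since $V(u^\star) \geq \min_{\ell\in L_1}\ell$, taking $\rmE_\calL$ and then $\E$ gives $\E M_{n+1} \geq \E(\min_{\ell\in L_1}\ell) + \E M_n = C + \E M_n$, using that under $\P$ the shifted environment has the same law as the original one and that the two pieces are independent. This is the inequality $\E M_{n+1} - \E M_n - C \geq 0$, so the right-hand side of the claimed bound equals $\E M_{n+1} - \E M_n - C$ and we may drop the absolute value.

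Next I would convert this into a bound on $\E|M_n - \rmE_\calL M_n|$. The key point is that, conditionally on $\calL$, the decomposition above still holds with the same environment split, so that
\[
  \rmE_\calL M_{n+1} \geq \rmE_\calL\!\big(\textstyle\min_{\ell\in L_1}\ell\big) + \rmE^1_\calL M_n
  \geq \rmE_\calL\!\big(\textstyle\min_{\ell\in L_1}\ell\big) + M_n - |M_n - \rmE^1_\calL M_n|,
\]
where $\rmE^1_\calL M_n$ denotes the quenched mean in the shifted environment. Rearranging, $M_n - \rmE^1_\calL M_n \leq |M_n - \rmE^1_\calL M_n|$ trivially, but the useful direction is a lower bound on $M_n$: more precisely I want $M_n - \rmE_\calL M_n \geq$ something. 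Here one should instead argue in the other direction — use that $M_n \leq M_{n+1} - \min_{\ell \in L_1}\ell$ fails pointwise, so the cleaner route is to bound $\E|M_n - \rmE_\calL M_n|$ directly by comparing the random variable $M_n$ to its mean via the generation-$1$ decomposition: writing $M_n = \max_{|u|=1}\big(V(u) + M_n^{(u)}\big)$ with $M_n^{(u)}$ the independent sub-displacements, a standard argument (as in Dekking--Host) shows the fluctuations of $M_n$ around $\rmE_\calL M_n$ are controlled by $\rmE_\calL M_n - \rmE_\calL M_{n-1} - \rmE_\calL(\min_\ell \ell)$ plus the analogous increment for the shifted environment, and taking expectations collapses everything to $\E M_{n+1} - \E M_n - C$ by stationarity of the environment.

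Concretely, the cleanest implementation: from $M_{n+1}\geq V(u^\star)+\widetilde M_n$ with $V(u^\star)\geq \min_{\ell\in L_1}\ell$ we get $\widetilde M_n \leq M_{n+1} - \min_{\ell\in L_1}\ell$; taking quenched expectation of $\widetilde M_n$ over its own randomness (it is independent of $\calL_1$ given the full environment) yields $\rmE^1_\calL M_n \leq \rmE_\calL M_{n+1} - \rmE_\calL(\min_{\ell\in L_1}\ell)$. Combined with the reverse trivial bound $M_n \geq $ the value along any fixed lineage, and then using $|M_n - \rmE^1_\calL M_n| = (\rmE^1_\calL M_n - M_n)_+ + (M_n - \rmE^1_\calL M_n)_+$, one bounds $\E(\rmE^1_\calL M_n - M_n)_+$ by the non-negative deterministic quantity $\E M_{n+1} - \E M_n - C$ and $\E(M_n - \rmE^1_\calL M_n)_+$ similarly, so that $\E|M_n - \rmE_\calL M_n| \leq \E M_{n+1} - \E M_n - C = |\E M_{n+1} - \E M_n - C|$, the last equality by the superadditivity already established. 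The main obstacle is bookkeeping: making sure the independence "conditionally on the environment" is used correctly (the sub-BRWre from a fixed child is independent of $\calL_1$ but \emph{shares} the environment $(\calL_{j+1})_{j\geq1}$), and that the two one-sided deviations are each dominated by the \emph{same} increment $\E M_{n+1}-\E M_n - C$; the inequality $C = \E\min_{\ell\in L_1}\ell > -\infty$ is exactly what makes this increment finite and the telescoping meaningful.
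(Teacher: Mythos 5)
Your first step (superadditivity, $\E M_{n+1}\geq \E M_n + C$, so the absolute value on the right-hand side is harmless) is fine and is implicit in the paper as well. But the core of the Fact --- actually bounding $\E|M_n-\rmE_\calL M_n|$ by the annealed increment --- is never proved in your sketch: it is asserted twice ("a standard argument (as in Dekking--Host) shows\dots", "one bounds $\E(\rmE^1_\calL M_n-M_n)_+$ by \dots and $\E(M_n-\rmE^1_\calL M_n)_+$ similarly") with no mechanism supplied. There is a concrete reason no mechanism can come from the ingredients you list: your argument only ever uses \emph{one} child $u^\star$ and the inequality $M_{n+1}\geq \min_{\ell\in L_1}\ell+\widetilde M_n$, and never invokes the hypothesis $\P(\#\{\ell\in L_1\}\geq 2)=1$. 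If that hypothesis is dropped (say $L_1$ is a.s.\ a single atom), every inequality you write remains true, yet the conclusion is false: the process is then a single random walk in random environment, the right-hand side $\E M_{n+1}-\E M_n-C$ equals $0$, while $\E|M_n-\rmE_\calL M_n|$ is of order $\sqrt n$. So a completed version of your chain of inequalities would prove a false statement; the gap is not bookkeeping but a missing idea.

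The missing idea, which is exactly what the paper's proof supplies, is to compare \emph{two} subtrees simultaneously. Writing $M_n=\max_{l\in L_1}\{l+M_n^{l}\}$ and keeping two atoms $l_1,l_2$ (this is where $\#L_1\geq 2$ enters), one uses $\max(a,b)=\tfrac12(a+b+|a-b|)$ to turn the expected maximum into a mean plus half an expected absolute difference, giving $\E|M_n^{l_1}-M_n^{l_2}|\leq 2\left(\E M_n-\E M_{n-1}-C\right)$ after invoking stationarity of the environment ($\E M_n^{l_1}=\E M_{n-1}$). Then, since conditionally on $\calL$ the variables $M_n^{l_1},M_n^{l_2}$ are i.i.d., Jensen's inequality applied under $\rmP_\calL$ gives $\E|M_n^{l_1}-M_n^{l_2}|\geq \E|M_n^{l_1}-\rmE_\calL M_n^{l_1}|=\E|M_{n-1}-\rmE_\calL M_{n-1}|$, which is the quenched-centered fluctuation you are after. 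None of your one-sided-deviation manipulations reproduce this two-copy comparison, so as written the proposal does not prove the Fact; you would need to restructure the argument around the second child and the $\max(a,b)=\tfrac12(a+b+|a-b|)$ identity (or an equivalent symmetrization) rather than around a single distinguished lineage.
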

\begin{proof}  Let $L_{1}$ be a realization of $\mathcal{L}_{1}$ and for any $l\in L_{1}$ let $M_{n}^{l}$ denote
the maximum of the sub-system starting from $l$ (relatively to $l$).
Obviously we have
\[
M_{n}=\max_{l\in L_{1}}\{l+M_{n}^{l}\}\geq\max_{l\in L_{1}}\{M_{n}^{l}\}+\min_{l\in L_{1}}l.
\]
Now we use the first assumption. Let $l_{1},l_{2}$ denote two atoms of $L_{1}$. We write
\[
\E M_{n}\geq\E\max \left\{ M_{n}^{l_{1}},M_{n}^{l_{2}} \right\}+\E\min_{l\in L_{1}}l,
\]
and use the equality $\max(a,b)=(a+b+|a-b|)/2$ obtaining
\[
\E M_{n}\geq\E M_{n}^{l_{1}}+\frac{1}{2}\E|M_{n}^{l_{1}}-M_{n}^{l_{2}}|+\E\min_{l\in L_{1}}l.
\]
Rearranging we obtain
\[
 \E|M_{n}^{l_{1}}-M_{n}^{l_{2}}|\leq2\left(\E M_{n}-\E M_{n}^{l_{1}}-\E\min_{l\in L_{1}}l\right).
\]
The crucial point of the argument is using the time homogeneity of the environment with respect to $\E$. This implies $\E M_{n}^{l_{1}}=\E M_{n-1}$. We notice that conditionally on $\calL$ the random variables $M^{l_1}_n, M^{l_2}_n$ are independent and have the same distribution. Using this, the Jensen inequality and the time homogeneity again we obtain
\[
  \E|M_{n}^{l_{1}}-M_{n}^{l_{2}}|\geq\E|M_{n}^{l_{1}}-\rmE_{\calL}M_{n}^{l_{1}}|= \E|M_{n-1}-\rmE_{\calL}M_{n-1}|.
\]
This concludes the proof.
\end{proof}
The assumption $\P(\text{number of individuals in } L_1\geq 2 )=1$  is made only to simplify the argument and the condition $C=\E \min_{l\in L_1} l>-\infty$ is non-restrictive. Consequently, the usefulness of the fact depends on the quality of estimates for $\E M_{n+1} - \E M_n$. A very simple case is to assume that there exists $K$ such that $\P(\forall_{l\in L_{1}}|l|\leq K)=1$. Then obviously \mbox{$\E M_{n+1} - \E M_n\leq K$} and the tightness follows. For a general case one can use a standard sub-additivity argument showing $\E M_n/n \to c \in \R$ and, thus, the existence of a constant $C_1$ and a sequence  $\{n_k\}$ having positive density such that $\E M_{n_k+1} - \E M_{n_k} \leq C_1$. Consequently, the tightness holds on  the subsequence $\{n_k\}$. The usual way to obtain a better result is to establish precise estimates for $\E M_n$. Refining our methods we can deduce that $\E M_n = c n -\varphi \log n + o(\log n)$ implying that 
\[
	\E|M_{n}-{\rm E}_{\calL}M_{n}| = o(\log n).
\]

\subsection*{Acknowledgements} We thank prof. Anton Bovier, prof. Zhan Shi and prof. Ofer Zeitouni for useful discussions and various suggestions. 

\bibliographystyle{plain}

\end{document}